\newcommand{\bbr}{\mathbb{R}}
\newcommand{\margnote}[1]{
\ifthenelse{\boolean{shownotes}}%
{\marginpar{\raggedright\tiny\texttt{#1}}}%
{}%
}
\newcommand{\hole}[1]{
\ifthenelse{\boolean{shownotes}}%
{\begin{center} \fbox{ \rule {.25cm}{0cm} \rule[-.1cm]{0cm}{.4cm}
\parbox{.85\textwidth}{\begin{center} \texttt{#1}\end{center}} \rule
{.25cm}{0cm}}\end{center}} {} }
\title[On the singular Cucker--Smale model]{One dimensional singular Cucker--Smale model: uniform-in-time mean-field limit and contractivity }
\author[Choi]{Young-Pil Choi}
\address[Young-Pil Choi]{\newline Department of Mathematics\newline
Yonsei University, 50 Yonsei-Ro, Seodaemun-Gu, Seoul 03722, Republic of Korea}
\email{ypchoi@yonsei.ac.kr}
\author[Zhang]{Xiongtao Zhang}
\address[Xiongtao Zhang]{\newline Center for Mathematical Sciences\newline
 Huazhong University of Science and Technology, Luoyu Road 1037, Wuhan 430074, China}
\email{xtzhang@hust.edu.cn}
\numberwithin{equation}{section}
\newtheorem{theorem}{Theorem}[section]
\newtheorem{lemma}{Lemma}[section]
\newtheorem{corollary}{Corollary}[section]
\newtheorem{remark}{Remark}[section]
\newtheorem{definition}{Definition}[section]
\newcommand{\R}{\mathbb R}
\newcommand{\mc}{\mathcal C}
\newcommand{\bq}{\begin{equation}}
\newcommand{\eq}{\end{equation}}
\newcommand{\lt}{\left}
\newcommand{\rt}{\right}
\newcommand{\pa}{\partial}
\newcommand{\W}{\mathcal{W}}
\newcommand{\intr}{\int_\R}
\newcommand{\intrr}{\iint_{\R \times \R}}
\begin{document}
\allowdisplaybreaks


\subjclass[]{}
\keywords{}

\begin{abstract} 
We analyze the one dimensional Cucker--Smale (in short CS) model with a weak singular communication weight $\psi(x) = |x|^{-\beta}$ with $\beta \in (0,1)$. We first establish a global-in-time existence of measure-valued solutions to the kinetic CS equation. For this, we use a proper change of variable to reformulate the particle CS model as a first-order particle system and provide the uniform-in-time stability for that particle system. We then extend this stability estimate for the singular CS particle system. By using that stability estimate, we construct the measure-valued solutions to the kinetic CS equation globally in time. Moreover, as a direct application of the uniform-in-time stability estimate, we show the quantitative uniform-in-time mean-field limit from the particle system to that kinetic CS equation in $p$-Wasserstein distance with $p \in [1,\infty]$. Our result gives the uniqueness of measure-valued solution in the sense of mean-field limits, i.e., the measure-valued solutions, approximated by the empirical measures associated to the particle system, uniquely exist. Similar results for the first-order model also follow as a by-product. We also reformulate the continuity-type equation, which is derived from the first-order model, as an integro-differential equation by employing the pseudo-inverse of the accumulative particle distribution. By making use of a modified $p$-Wasserstein distance, we provide the contractivity estimate for absolutely continuous solutions of the continuum equation. 
\end{abstract}

\maketitle \centerline{\date}

\tableofcontents

%
%
%
%
\section{Introduction}\label{sec:1}
\setcounter{equation}{0}

The study of collective behaviors for multi-agent systems have recently attracted the interest of many researchers in many different scientific disciplines such as applied mathematics, biology, and control community due to their biological and engineering applications \cite{A82,ADKMZ08,CKPP19,LPLSFD07,PGE09,TBL06}. In the current work, we are interested in the celebrated Cucker--Smale (in short CS) model \cite{CS07} in one dimension, which is a second-order particle system, proposed to describe the velocity alignment behaviors. More precisely, let $x_i(t)$ and $v_i(t)$ be the position and velocity of $i$-th agent at time $t \geq 0$, respectively. Then the CS model reads as
\begin{align}\label{p_CS}
\begin{aligned}
\frac{dx_i(t)}{dt} &= v_i(t), \quad i=1,\dots, N, \quad t >0,\cr
\frac{dv_i(t)}{dt} &= \frac1N \sum_{j=1}^N \psi(x_j(t) - x_i(t))(v_j(t) - v_i(t)),
\end{aligned}
\end{align}
with the initial data
\[
(x_i(0), v_i(0)) =: (x_i^0, v_i^0), \quad  i=1,\dots, N.
\]
Here the function $\psi : \R \to \R_+:= (0,\infty)$ is called the communication weight, which is given as nonnegative and nonincreasing function and $N$ is the number of agents. In \cite{CS07}, the regular communication weight
\bq\label{comm_reg}
\psi(x) = \frac{1}{(1 + |x|^2)^{\alpha/2}} \quad \mbox{with} \quad \alpha > 0
\eq
is considered, and the sufficient frameworks for the velocity alignment behaviors are investigated. 

The CS model \eqref{p_CS} and its variants have been studied extensively, and some previous works can be summarized as follows. The large time behaviors of CS model including the emergences of mono-cluster and multi-cluster are obtained in \cite{H-K-P-Z,H-K-Z,HT08}. The mean field limits of the CS-type models with regular/singular communication weights are studied in \cite{CCHS19, C-F-R-T,D-F-T,HKZ18,HL09,MP}. Other generalization and variants of CS model including noises, normalization weights, temperatures, and time delays, are discussed in \cite{CL18, C-M,H-R,M-T}. We refer to \cite{CCP17, CHL17, MMPZ19} and references therein for recent surveys on the CS-type flocking models. On the other hand, for a very large number of agents, the particle system \eqref{p_CS} is computationally complicated, and thus it is better to derive a corresponding continuum model of the dynamics by introducing a distribution function. At the formal level, we can derive the following Vlasov-type equation from the CS model \eqref{p_CS} as $N \to \infty$:
\bq\label{k_CS}
\begin{aligned}
&\pa_t F + v \pa_x F + \pa_v  \lt( \mathcal{A}(F) F \rt) = 0, \quad (x,v) \in \R \times \R, \quad t > 0,\\
&\mathcal{A}(F)(x,v,t) = \iint_{\R \times \R} \psi(x-y)(w-v) F(y,w,t)\,dydw,
\end{aligned}
\eq
where $F = F(x,v,t)$ denotes the one-particle distribution function at the position-velocity $(x,v) \in \R \times \R$ at time $t$, and $\mathcal{A}(F)$ is the velocity alignment force.

In the present work, we deal with a singular communication weight of the form
\bq\label{comm_sing}
\psi(x) = \frac{1}{|x|^\beta} \quad \mbox{with} \quad \beta > 0.
\eq
Note the integral $\mathcal{A}(f)(x,v,t)$ is not well defined when $\beta\geq 1$ in one dimensional case, thus we have to require $\beta<1$ in this paper. It is worth noticing that the original CS model \cite{CS07}, i.e., the system \eqref{p_CS} with \eqref{comm_reg}, does not take into account the collision avoidance between agents. For the singular CS model, i.e., the system \eqref{p_CS} with \eqref{comm_sing}, in \cite{CCMP17}, a critical value of the exponent $\beta$ in $\psi$ leading to global existence and finite-time collision between agents. More precisely, if $\beta \geq 1$, there is no collision between agents in finite time. On the other hand, we can construct an initial configuration for finite-time collision for $\beta < 1$, see \cite{P14}. This enables us to have the collision avoidance property, not by adding additional short-range repulsive forces. Despite this advantage of considering the singular communication weight, it generates a lot of difficulties in analysis due to the singularity. So far, there are only a few works on the singular CS model. The  collision avoidance and large time behaviors of the singular CS model are investigated in \cite{CCMP17,MMPZ19,ZZ20}. In \cite{MP,P14}, global-in-time existence of measure-valued solutions to the kinetic CS equation, see Section \ref{sec:def_mea} for the definition, is established and the global well-posedness of the singular CS particle system are obtained, respectively. In particular, the singular CS model with $\beta<1/2$ is studied in \cite{MP}, and constructed the measure-valued solution through atomic approximation with different mass. However, this limit process is a little bit different from the mean-field limit of the original CS model. Moreover, the authors in \cite{CCM14} applied the contraction mapping criteria to construct the local well-posedness of general CS model with singular communication weights and nonlocal velocity couplings. This type of construction depends on small time to yield a contraction map, and therefore it is difficult to extend the local-in-time solution to global one. We also refer to \cite{AC} for the well-posedness of continuity equation with non-smooth velocity.

In the current work, we will rigorously prove the uniform-in-time mean-field limit from \eqref{p_CS} to \eqref{k_CS}, and then establish global-in-time well posedness of measure-valued solutions to the kinetic equation \eqref{k_CS} in the regime $\beta \in (0,1)$. In order to overcome the difficulties mentioned above, inspired by \cite{ZZ20}, we first reformulate the particle system \eqref{p_CS} as a first-order swarming model. To be more specific, for a weakly singular regime $\beta \in (0,1)$, a suitable change of variable to the second-order system \eqref{p_CS} is proposed in \cite{ZZ20}, and an equivalent first-order system  is successfully derived:
\bq\label{D1}
\frac{dx_i(t)}{dt} = \omega_i + \frac1N \sum_{j=1}^N \Psi(x_j(t) - x_i(t)), \quad i=1,\dots,N, \quad t > 0,
\eq
where $\omega_i$ is the natural velocity of $i$-th agent and the interaction function $\Psi$ is the anti-derivative of $\psi$. More precisely, we have
\bq\label{eq_Psi}
\omega_i=v_i(0)-\frac1N \sum_{j=1}^N \Psi(x_j(0) - x_i(0)),\quad \Psi(x) := \int_0^x \psi(r)\,dr = \frac{sgn(x)}{1-\beta} \frac{1}{|x|^{\beta-1}}, \quad \beta \in (0,1).
\eq
Note that the communication weight $\psi$ is singular for the second-order CS model, however the interaction function $\Psi$ is regular for first-order reformulation \eqref{D1}. Therefore, we can apply classical methods to the first order model \eqref{D1}, and then further apply the equivalence between \eqref{p_CS} and \eqref{D1} to obtain the desired results for the original model \eqref{p_CS}. Moreover, even though the particle system \eqref{D1} plays a role as an intermediate system, it resembles the celebrate Kuramoto model for the synchronization phenomena \cite{Ku84} and exhibits a consensus behavior of agents. We refer to \cite{ZZ20} for the global existence and uniqueness of classical solutions and its large time behaviors, see also Section \ref{ssec:lt}.

Our main results are three folds. First, in Section \ref{sec:3}, we begin with the stability estimate for the first-order particle system \eqref{D1}. By using the large time behavior estimate of solutions and a modulated $\ell_p$ distance, we show the uniform-in-time stability of solutions to \eqref{D1}. In particular, if two solutions of \eqref{D1} have the same natural velocity $\omega = (\omega_1,\dots,\omega_N)$, then we have the contractivity estimate for solutions, see Theorem \ref{L4.1} in Section \ref{ssec:first} for details.  We take a full advantage of this stability estimate for the first-order model and extend it to the singular CS particle system \eqref{p_CS} in Section \ref{ssec:CS}. This is one of our main contributions in this work.  

Secondly, in Section \ref{sec:measure}, we analyze the kinetic equation \eqref{k_CS}. By employing the particle-in-cell method and $p$-Wasserstein distance, see Definition \ref{D2.2}, we provide a quantitative error estimate between the particle system \eqref{p_CS} and the kinetic equation \eqref{k_CS}. We consider an empirical measure associated to the particle system \eqref{p_CS} and apply the uniform-in-time stability estimate for the particle system. This yields the uniform-in-time Cauchy estimate in $p$-Wasserstein distance for that empirical measures, and thus we construct the global-in-time measure-valued solutions to the kinetic equation \eqref{k_CS}. We emphasize that the quantitative bound is independent of time $t$. As a direct consequence, we rigorously prove the uniform-in-time mean-field limit of the particle system \eqref{p_CS}. It is also worth noticing that the our strategy provides the uniqueness of measure-valued solutions in the sense of mean-field limits, i.e., the measure-valued solutions, approximated by the empirical measures associated to the particle system, uniquely exist. Moreover, we also establish the uniform-in-time mean-field limit from the first-order system \eqref{D1} to the following continuity-type equation:
\bq\label{main_eq}
\pa_t f + \pa_x ((\omega - \Psi \star \rho)f) = 0.
\eq
Here $f = f(x,\omega,t)$ is the one-particle distribution function at a phase point $(x,v) \in \R \times \R$ at time $t$, and $\rho$ is the first marginal of $f$, i.e., $\rho = \int_\R f\,d\omega$. The global-in-time existence and uniqueness of measure-valued solutions are also presented. 

Finally, in Section \ref{sec:contr}, we will study the particle system \eqref{D1} and the corresponding continuum equation \eqref{main_eq}. As mentioned before, these two equations are interesting models themselves. In particular, the equation \eqref{main_eq} has a gradient flow structure, see Section \ref{ssec:grad}. In Section \ref{sec:contr}, we provide the contractivity of solutions to \eqref{main_eq} by using the method of optimal mass transport. We rewrite the continuity-type equation in terms of the pseudo-inverse of the cumulative distribution and introduce a modified $p$-Wasserstein distance to estimate the error between two absolutely continuous solutions to the equation \eqref{main_eq}. 

The rest of the paper will be organized as follows. In Section \ref{sec:2}, we will provide some preliminary concepts and previous results which will be used in the later sections. In Section \ref{sec:3}, we construct the uniform-in-time stability of the particle system \eqref{p_CS} and \eqref{D1}. In Section \ref{sec:measure}, we will establish the uniform-in-time mean-field limit and the global-in-time well posedness for the kinetic equations \eqref{k_CS} and \eqref{main_eq}. In Section \ref{sec:contr}, we will construct the contractivity of solutions to the continuity equation \eqref{main_eq}, which immediately yields the uniqueness of solutions to  \eqref{k_CS} and \eqref{main_eq}. Finally, Section \ref{sec:6} will be contributed as a conclusion. 

Throughout this paper, we denote by $C$ a generic, no necessarily identical, positive constant independent of $N$ and $t$.\newline

%
%
%
%
\section{Preliminaries}\label{sec:2}
\setcounter{equation}{0}

In this section, we will present some properties and estimates on the first-order particle system \eqref{D1} and its continuum version \eqref{main_eq}. Then we will discusse the general notions of measure-valued solutions to continuity-type equations. Finally, we will provide a brief explanation on the relation between kinetic CS model \eqref{k_CS} and the continuity-type equation \eqref{main_eq}.

\subsection{A gradient flow formulation}\label{ssec:grad}
In this part, we show that the continuity-type equation \eqref{main_eq} has a gradient flow structure. Note that if we set
\[
K(x) := \int_0^x \Psi(y)\,dy = \frac{1}{(2-\beta)(1-\beta)}|x|^{2-\beta},
\]
then the force field in \eqref{main_eq} can be rewritten as 
\[
\omega - \Psi \star \rho = \pa_x (\omega x - K \star \rho) = -\pa_x \delta_f \mathcal{F}, 
\]
where $\delta_f \mathcal{F} = \delta \mathcal{F}/\delta f$ is the functional derivative of a free energy $\mathcal{F}$ given by
\[
\mathcal{F}(f):=   \frac12\intrr K(x-y)f(x,\omega)f(y,w)\,dxd\omega dydw - \intrr \omega x f\,dxd\omega.
\]
This shows that our continuum equation \eqref{main_eq} can be reformulated as
\[
\pa_t f = \pa_x ((\pa_x \delta_f \mathcal{F})f).
\]
Furthermore, from this reformulation, we can easily find
\[
\frac{d}{dt} \mathcal{F}(f) = \intrr (\delta_f \mathcal{F}) \pa_t f\,dxd\omega = -\intrr |\pa_x \delta_f \mathcal{F}|^2 f\,dxd\omega = -\intrr |\omega - \Psi \star \rho|^2 f\,dxd\omega.
\]
\begin{remark}\label{rmk_free}
Let us assume that the $f$ is compactly supported in both $x$ and $\omega$ and we can easily find that there exists a positive constant $c_0>0$ such that $\mathcal{F}(f) \geq - c_0$. This asserts the uniform-in-time integrability of the second moment of the velocity field
\[
\int_0^\infty\intrr |\omega - \Psi \star \rho|^2 f\,dxd\omega dt < \infty.
\]
\end{remark}

\subsection{Energy estimates}\label{ssec:energy} In this part, we provide some a priori energy estimates for the continuity-type equation \eqref{main_eq}. 

\begin{lemma}\label{lem_energy} Let $f$ be a solution to the equation \eqref{main_eq} with sufficient integrability. Then we have
\begin{itemize}
\item[(i)] The total mass is conserved:
\[
\intrr f(x,\omega,t)\,dxd\omega = \intrr f_0(x,\omega)\,dxd\omega.
\]
\item[(ii)] The $p$-th moments in $\omega$ is conserved:
\[
\intrr |\omega|^p f(x,\omega,t)\,dxd\omega = \intrr |\omega|^p f_0(x,\omega)\,dxd\omega,\quad p \geq 1.
\]
\item[(iii)] The second moment of the velocity field is nonincreasing:
\end{itemize}
\bq\label{eq_free}
\frac{d}{dt}\mathcal{E}(f) + \mathcal{D}(f) = 0,
\eq
where the energy functional $\mathcal{E}(f)$ and its dissipation rate $\mathcal{D}(f)$ are given by
\[
\mathcal{E}(f) := \frac12 \intrr | \omega - (\Psi \star \rho)(x,t)|^2 f(x,\omega,t)\,dxd\omega
\]
and
$$\begin{aligned}
\mathcal{D}(f) &:= \intrr (\psi \star \rho)|\omega - \Psi \star \rho|^2 f\,dxd\omega\cr
& - \intrr (u(x) - (\Psi \star \rho)(x)) \psi(x-y) (u(y) -  (\Psi \star \rho)(y)) \rho(x)\rho(y)\,dxdy,
\end{aligned}$$
respectively.
\end{lemma}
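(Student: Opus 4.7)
The plan is to treat the three items in sequence. For \textbf{(i)}, I would integrate equation \eqref{main_eq} over $\R\times\R$; since the flux $\partial_x((\omega-\Psi\star\rho)f)$ is a perfect $x$-derivative, it vanishes under sufficient decay at infinity, giving $\frac{d}{dt}\iint f\,dxd\omega=0$. For \textbf{(ii)}, the key observation is that $|\omega|^p$ depends only on $\omega$ while \eqref{main_eq} contains no $\omega$-derivatives; multiplying by $|\omega|^p$ and integrating, the flux term again vanishes after integration by parts in $x$, so the $p$-th $\omega$-moment is constant in time.

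For \textbf{(iii)}, I set $A(x,t):=(\Psi\star\rho)(x,t)$ so that $\mathcal{E}(f)=\tfrac12\iint(\omega-A)^2 f\,dxd\omega$, and differentiate under the integral:
\[
\frac{d}{dt}\mathcal{E}(f) = -\iint(\omega-A)(\partial_t A)f\,dxd\omega + \tfrac12\iint(\omega-A)^2\partial_t f\,dxd\omega.
\]
In the second integral I substitute $\partial_t f = -\partial_x((\omega-A)f)$ and integrate by parts in $x$, using $\partial_x A = \psi\star\rho$; the resulting contribution is exactly $-\iint(\omega-A)^2(\psi\star\rho)f\,dxd\omega$, which reproduces the first piece of $\mathcal{D}(f)$.

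To handle the $\partial_t A$ term, I would first integrate \eqref{main_eq} in $\omega$ to produce the continuity equation
\[
\partial_t\rho + \partial_x\bigl((u-A)\rho\bigr)=0, \qquad u(x)\rho(x):=\int_{\R}\omega f(x,\omega)\,d\omega,
\]
with $u$ the macroscopic velocity appearing in the statement of $\mathcal{D}(f)$. Substituting into $\partial_t A(x,t)=\int\Psi(x-y)\partial_t\rho(y,t)\,dy$ and integrating by parts in $y$ with $\partial_y\Psi(x-y)=-\psi(x-y)$ yields
\[
\partial_t A(x,t) = -\int_{\R}\psi(x-y)(u(y)-A(y))\rho(y)\,dy.
\]
Using the identity $\int(\omega-A(x))f(x,\omega)\,d\omega=(u(x)-A(x))\rho(x)$, the first integral collapses to exactly the negative of the second piece of $\mathcal{D}(f)$, and summing the two contributions gives \eqref{eq_free}.

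The main obstacle is really just the bookkeeping of signs across the two integrations by parts (one in $x$ against $\partial_x f$, one in $y$ against $\Psi\star\rho$) together with the identity $\partial_x A=\psi\star\rho$; justifying the formal exchange of differentiation and integration is routine under the ``sufficient integrability'' hypothesis, since for $\beta\in(0,1)$ the kernel $\psi$ is locally integrable and $\Psi$ is H\"older continuous, and the boundary terms at $|x|,|\omega|\to\infty$ vanish by the assumed decay of $f$.
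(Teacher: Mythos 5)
Your derivation of the identity \eqref{eq_free} follows essentially the same route as the paper: split $\tfrac{d}{dt}\mathcal{E}$ into the $\partial_t(\Psi\star\rho)$ piece and the $\partial_t f$ piece, use the continuity equation $\partial_t\rho+\partial_x((u-\Psi\star\rho)\rho)=0$ for the former, integrate by parts in $x$ for the latter with $\partial_x(\Psi\star\rho)=\psi\star\rho$, and collapse the $\omega$-integrals via $\int(\omega-\Psi\star\rho)f\,d\omega=(u-\Psi\star\rho)\rho$. That part is correct.

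There is, however, a genuine gap in part (iii). The claim is not merely the identity \eqref{eq_free} but that $\mathcal{E}(f)$ is \emph{nonincreasing}, which additionally requires $\mathcal{D}(f)\geq 0$. Since $\mathcal{D}(f)$ is a difference of two integrals of indefinite relative size, this is not immediate and your proposal never addresses it. The paper closes this with two elementary inequalities: first, applying the Cauchy--Schwarz inequality in the symmetric double integral
\[
\iint (u(x)-\Psi\star\rho(x))\,\psi(x-y)\,(u(y)-\Psi\star\rho(y))\,\rho(x)\rho(y)\,dxdy \;\leq\; \intr (\psi\star\rho)\,|u-\Psi\star\rho|^2\rho\,dx,
\]
and second, using $\rho|u|^2\leq\int|\omega|^2 f\,d\omega$ (Jensen/Cauchy--Schwarz applied to $\rho u=\int\omega f\,d\omega$), which after shifting by $\Psi\star\rho$ gives $|u-\Psi\star\rho|^2\rho\leq\int|\omega-\Psi\star\rho|^2 f\,d\omega$. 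Chaining the two shows the cross term is dominated by the first (positive) term of $\mathcal{D}(f)$, hence $\mathcal{D}(f)\geq 0$. You should add this step to complete the proof of (iii).
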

\begin{proof} A straightforward computation gives
$$\begin{aligned}
&\frac12 \frac{d}{dt}\intrr | \omega - \Psi \star \rho|^2 f(x,\omega,t)\,dxd\omega\cr
&\quad = - \intrr  (\omega - \Psi \star \rho) (\Psi \star \pa_t \rho) f\,dxd\omega + \frac12 \intrr | \omega - \Psi \star \rho|^2 \pa_t f \,dxd\omega\cr
&\quad =: I_1 + I_2.
\end{aligned}$$
For the estimate of $I_1$, we denote $(\rho u)(y) := \intr \omega f\,d\omega$ and then we have the following estimates,
$$\begin{aligned}
\Psi \star \pa_t \rho &= \intr \Psi(x-y) \pa_t \rho (y)\,dy\cr
&=-\intrr \Psi(x-y) \pa_y \lt((\omega - (\Psi \star \rho)(y))f(y,\omega)\rt)dyd\omega\cr
&=-\intrr \psi(x-y) (\omega -  (\Psi \star \rho)(y))f(y,\omega)\,dyd\omega\cr
&=-\intr \psi(x-y) (u(y) -  (\Psi \star \rho)(y)) \rho(y)\,dy.
\end{aligned}$$
This deduces that 
\begin{align}\label{I_1}
\begin{aligned}
I_1 &= -\intr (u(x) - (\Psi \star \rho)(x)) (\Psi \star \pa_t \rho) \rho(x)\,dx\cr
&= \intrr (u(x) - (\Psi \star \rho)(x)) \psi(x-y) (u(y) -  (\Psi \star \rho)(y)) \rho(x)\rho(y)\,dxdy.
\end{aligned}
\end{align}
Next for $I_2$, we have the following estimates,
$$\begin{aligned}
I_2 &= - \frac12 \intrr | \omega - \Psi \star \rho|^2 \pa_x ((\omega - \Psi \star \rho)f) \,dxd\omega\cr
&= - \intrr (\omega - \Psi \star \rho)(\psi \star\rho)  (\omega - \Psi \star \rho)f \,dxd\omega\cr
&= - \intrr (\psi \star \rho)|\omega - \Psi \star \rho|^2 f\,dxd\omega.
\end{aligned}$$
This combined with \eqref{I_1} asserts the free energy estimate \eqref{eq_free}. It remains to show the nonnegativity of the dissipation rate $\mathcal{D}(f)$. For this, it suffices to get $I_1 \leq - I_2$. By using H\"older's inequality, we find
\[
I_1 \leq \intrr (\psi \star \rho)|u - \Psi \star \rho|^2 \rho\,dx.
\]
On the other hand, we obtain
\[
\rho |u|^2 \leq \frac{|\rho u|^2}{\rho} = \frac{\lt|\intr \omega f\,d\omega \rt|^2}{\rho} \leq \intr |\omega|^2 f\,d\omega,
\]
and this yields
\[
|u - \Psi \star \rho|^2 \rho \leq \intr |\omega - \Psi \star \rho|^2 f\,d\omega.
\]
Thus we have $I_1 \leq -I_2$, and subsequently, this concludes $\mathcal{D}(f) = - (I_1 + I_2) \geq 0$.
\end{proof}

\subsection{Large time behavior for the particle system \eqref{D1}}\label{ssec:lt} In this part, we recall the results on the global existence and uniqueness of solutions to the first-order particle system \eqref{D1} and its large time behavior. 

For notational simplicity, we denote by $X(t) = (x_1(t), \cdots, x_N(t))$, where $x_i(t)$ is the position of $i$-th particle at time $t \geq 0$, and $X_0 = X(0)$. We also introduce diameters of position and natural velocity configurations:
\[
D_x(t) := \max_{1 \leq i,j \leq N} |x_i(t) - x_j(t)| \quad \mbox{and} \quad D_{\omega}:=  \max_{1 \leq i,j \leq N} |\omega_i - \omega_j|.
\]

\begin{theorem}[\cite{ZZ20}]\label{T3.1}
There exists a unique $\mc^1$ solution to the first-order CS model \eqref{D1} with $0<\beta<1$. Moreover, assume that the natural velocities and initial data satisfy the properties below,
\[\sum_{i=1}^N x_i^0 = 0\quad \mbox{and} \quad \sum_{i=1}^N \omega_i = 0.\]
Then, the following assertions hold:
\begin{itemize}
\item[(i)]
There exist positive constants $C_0:= \max \left\{D_x(0)\,, \Psi^{-1}(D_{\omega}) \right\}$ such that
\begin{equation*}
|x_i(t)-x_j(t)|\leq C_0,\quad 1\leq i\neq j\leq N,\quad t\geq 0. 
\end{equation*}
\item[(ii)]
Unconditional flocking occurs and there exists an equilibrium state $X^\infty=(x_1^\infty,\cdots,x_N^\infty)$ such that 
\[
|X(t)-X^{\infty}|\leq C e^{-\psi(C_0)t},
\]
where $C>0$ is independent of $t$.
\end{itemize}
\end{theorem}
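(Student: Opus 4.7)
The starting observation is that \eqref{D1} admits a convex gradient-flow structure. Setting $K(x):=\int_0^x \Psi(r)\,dr$ and
\[
V(X):=\frac{1}{2N}\sum_{i,j}K(x_j-x_i)-\sum_i\omega_i x_i,
\]
the oddness of $\Psi$ yields $\dot X=-\nabla V(X)$, and $K''=\psi>0$ makes $V$ convex. The right-hand side of \eqref{D1} is continuous but only H\"older (since $\Psi(x)\sim|x|^{1-\beta}$ near $0$), so Cauchy--Lipschitz does not apply directly. My plan is therefore to combine Cauchy--Peano with a one-sided Lipschitz estimate for existence and uniqueness, use a max--min ODE argument for the diameter bound (i), and exploit strong convexity of $V$ on the zero-mean hyperplane for the flocking statement (ii).

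Local existence follows from Cauchy--Peano. For uniqueness I would compare two solutions $X,Y$: antisymmetrising in $(i,j)$ and using $\Psi$ odd gives
\[
\tfrac{d}{dt}\tfrac{1}{2}|X-Y|^2 = -\frac{1}{2N}\sum_{i,j}\bigl[(x_j-x_i)-(y_j-y_i)\bigr]\bigl[\Psi(x_j-x_i)-\Psi(y_j-y_i)\bigr]\le 0,
\]
because $\Psi$ is non-decreasing, so $X\equiv Y$. For (i), let $M$ and $m$ index $\max x_i$ and $\min x_i$, and write $a_j:=x_M-x_j\ge 0$, $b_j:=x_j-x_m\ge 0$ with $a_j+b_j=D_x$. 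Oddness gives $\Psi(x_j-x_M)-\Psi(x_j-x_m)=-\Psi(a_j)-\Psi(b_j)$, and subadditivity of the concave function $r\mapsto r^{1-\beta}/(1-\beta)$ (vanishing at $0$) on $[0,\infty)$ delivers $\Psi(a_j)+\Psi(b_j)\ge\Psi(D_x)$ uniformly in $j$. Averaging yields $\dot D_x\le D_{\omega}-\Psi(D_x)$, hence $D_x(t)\le\max\{D_x(0),\Psi^{-1}(D_{\omega})\}=C_0$. The centre-of-mass is conserved by antisymmetry of the interaction, and $\sum\omega_i=\sum x_i^0=0$ forces $\sum x_i(t)=0$, so $|X(t)|\le\sqrt{N}\,C_0$ and the solution extends globally.

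For (ii), the diameter bound forces $\psi(x_i-x_j)\ge\psi(C_0)$ along the trajectory. The Hessian of $V$ is the graph Laplacian with weights $\psi(x_i-x_j)/N$ on the complete graph, and a standard quadratic-form calculation $\mathbf v^\top L\mathbf v=\tfrac{1}{N}\sum_{i<j}\psi(x_i-x_j)(v_i-v_j)^2$ shows that on $\{\sum_i x_i=0\}$ it dominates $\psi(C_0)\,I$; thus $V$ is $\psi(C_0)$-strongly convex on the invariant hyperplane. Coercivity of $V$ there produces a unique equilibrium $X^\infty$, and strong convexity gives $\langle\nabla V(X),X-X^\infty\rangle\ge\psi(C_0)|X-X^\infty|^2$; combined with $\dot X=-\nabla V(X)$, Gr\"onwall yields $|X(t)-X^\infty|\le|X_0-X^\infty|\,e^{-\psi(C_0)t}$. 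The main technical point is pinning down the spectral gap: it is exactly $\psi(C_0)$ because on the complete graph the algebraic connectivity equals $N$ times the minimum edge weight, and this $N$ exactly cancels the $1/N$ normalisation in \eqref{D1}.
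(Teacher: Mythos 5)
The paper cites Theorem~\ref{T3.1} from \cite{ZZ20} without reproving it, so there is no internal proof to compare against; I evaluate your argument on its own terms. Your gradient-flow strategy is sound and clean. The one-sided Lipschitz estimate from antisymmetrisation and monotonicity of $\Psi$ gives forward uniqueness despite the H\"older-only right-hand side, and is precisely the same mechanism the paper exploits in the stability estimate of Theorem~\ref{L4.1}, so you are well aligned with the paper's toolkit. The diameter bound via the max--min Dini-derivative ODE $\dot D_x\le D_\omega-\Psi(D_x)$ (using subadditivity of the concave $\Psi$ vanishing at $0$) is correct, as is the identification of $\nabla^2 V$ with the weighted graph Laplacian and the lower bound $\psi(C_0)I$ on the zero-sum subspace from $\frac{1}{2N}\sum_{k,\ell}\psi(x_k-x_\ell)(v_k-v_\ell)^2\ge\psi(C_0)|v|^2$.

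The one step you elide is verifying that the equilibrium $X^\infty$ actually lies in the region $\{D_x\le C_0\}\cap\{\sum_i x_i=0\}$ where the strong-convexity constant $\psi(C_0)$ is available. Coercivity plus convexity of $V$ give a minimiser on the zero-mean hyperplane, but one still needs its diameter under control so that (a) strong monotonicity $\langle\nabla V(X)-\nabla V(X^\infty),X-X^\infty\rangle\ge\psi(C_0)|X-X^\infty|^2$ holds along the segment joining $X(t)$ to $X^\infty$ (the set $\{D_x\le C_0\}$ is convex, so the segment stays inside once both endpoints do), and (b) the decay rate is $\psi(C_0)$ and not something worse. This is a small gap, easily closed: at a critical point, $\omega_i=\frac1N\sum_j\Psi(x_i^\infty-x_j^\infty)$, and taking $i$ to be the indices of $\max$ and $\min$ of $x_i^\infty$ and subtracting, the same subadditivity estimate gives $D_\omega\ge\Psi(D_x^\infty)$, hence $D_x^\infty\le\Psi^{-1}(D_\omega)\le C_0$. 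With that supplement your proof of (ii) is complete.
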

\noindent Note the diameter of any solution $X(t)$ depends only on the diameter of diameter of initial data $X(0)$ and diameter of natural velocities, which is independent of $N$. This is the one of the most important properties of the system \eqref{D1}, and it will be crucially used in the estimates of the uniform-in-time stability and mean-field limit later in Section \ref{sec:3}.

\subsection{Measure-valued solutions}\label{sec:def_mea}
In this subsection, we provide general concepts of measure-valued solutions and related ideas for the following continuity-type equation:
\begin{equation}\label{B1}
\partial_tg +\nabla_{z}\cdot (v[g] g)=0,\quad z\in\mathbb R^d,~~t > 0,
\end{equation}
where $v[g]$ is a $n$-dimensional vector field. Then we have the following definition of measure-valued solution of \eqref{B1}. 
\begin{definition} \label{D4.1}
For $T\in[0,\infty)$, $\mu_t\in L^{\infty}([0,T);\mathcal{P}(\mathbb R^d))$ is a measure-valued solution to \eqref{B1} with initial data $\mu_0\in\mathcal{P}(\mathbb R^d)$ if the following three assertions hold:\newline
\begin{itemize}
\item[(i)] Total mass is normalized: $\langle\mu_t,1\rangle=1$,
\item[(ii)] $\mu$ is weakly continuous in $t$:
\[ \langle\mu_t,\phi\rangle~\mbox{is continuous in $t$}, \quad \forall\,\phi(z) \in C_0^1(\mathbb R^d),
\]
\item[(iii)] $\mu$ satisfies the equation \eqref{B1} in the sense of distributions: 
\begin{equation*}
 \langle\mu_t, \varphi(\cdot,t)\rangle-\langle \mu_0, \varphi(\cdot,0)\rangle=
\int_0^t\langle\mu_s,\partial_s \varphi + v[\mu]\cdot \nabla_z \varphi  \rangle \,  ds, \quad \forall\,\varphi \in C^1_0(\mathbb R^d \times [0, T))
\end{equation*}
\end{itemize}
\end{definition}

\begin{remark}\label{R2.1}
There are two claims about the measure-valued solution. 
\begin{itemize}
\item[(i)] Note that if we let $z=(x,v)$, $g(z,t)=F(x,v,t)$ and the vector field to be $v[g]=(v,\mathcal{A}(F))$, then we obtain the definition of the measure-valued solution to \eqref{k_CS}. On the other hand, if we let $z=(x,\omega)$, $g(z,t)=f(x,\omega,t)$ and the vector field to be $v[g]=(w - \Psi \star \rho,0)$ in \eqref{B1}, then we have the definition of the measure-valued solution to \eqref{main_eq}.
\item[(ii)] Let $\mu^N_t$ be an empirical measure associated to the particle system \eqref{p_CS}, i.e.,
\[
\mu^N_t := \frac1N\sum_{i=1}^{N}\delta_{(x_i(t),v_i(t))},
\]
where $(x_i(t),v_i(t))$ is a solution to \eqref{p_CS}. Then it is easy to check that $\mu^N_t$ is a measure-valued solution to \eqref{k_CS}. Similarly, the empirical measure 
\bq\label{eq_mu}
\sigma^N_t := \frac1N\sum_{i=1}^{N}\delta_{(x_i(t),\omega_i)}
\eq 
is a measure-valued solution to \eqref{main_eq} provided $x_i(t)$ is a solution to \eqref{D1} with natural velocities $\omega_i$.
\end{itemize}
\end{remark}
\noindent Finally, we equip a proper metric to the probability measure space ${\mathcal P}(\bbr^{2d})$ to measure the distance between two measures, and then we will introduce the concept of local-in-time mean-field limit. In fact, we can endow $p$-Wasserstein distance $W_p$ in the probability space ${\mathcal P}_p(\bbr^{2d})$ which represents a collection of all probability measures with finite $p$-th moment, i.e., $\langle\mu, |z|^p \rangle<+\infty$.
\begin{definition}[\cite{AGS05, N84, V08}]\label{D2.2}
\begin{enumerate}
\item[(i)] Let $p\in [1,\infty]$. Then $p$-Wasserstein distance $W_p(\mu, \nu)$ is defined for any $\mu,\nu\in {\mathcal P}_p(\bbr^{2d})$ as
\[W_p(\mu,\nu):= \inf_{\gamma\in\Gamma(\mu,\nu)} \lt( \iint_{\bbr^{2d}\times \bbr^{2d}} |z-z^*|^p\,\gamma(dzdz^*) \rt)^{\frac{1}{p}}, \]
where $\Gamma(\mu,\nu)$ denotes the collection of all probability measures on $\bbr^{2d}\times \bbr^{2d}$ with marginals $\mu$ and $\nu$.

\item[(ii)] For any $T \in (0, \infty]$, the kinetic equation \eqref{main_eq} is derivable from the particle model \eqref{D1} in $[0,T)$, or equivalent to say the mean-field limit from the particle system \eqref{D1} to the kinetic equation \eqref{main_eq}, which is valid in $[0,T)$, if for every solution $\mu_t$ of the kinetic equation \eqref{main_eq} with initial data $\mu_0$, the following condition holds: for some $p \in [1, \infty]$ and $t \in [0, T)$,
\[ \lim_{N\rightarrow +\infty}W_p(\mu_0^N,\mu_0)=0 \quad \Longleftrightarrow \quad  \lim_{N\rightarrow +\infty}W_p(\mu_t^N,\mu_t)=0,\]
where $\mu^N_t$ is a measure-valued solution of the particle system \eqref{D1} with initial data $\mu^N_0$. 
\end{enumerate}
\end{definition}

\subsection{Relation between \eqref{k_CS}\label{ssec:2.5} and \eqref{main_eq}} Define a mapping $\Gamma: \R \times \R \times \R_+ \to \R \times \R \times \R_+$ by $\Gamma(x,\omega,t) = (x, \omega - \Psi \star \rho,t)$. Then $\Gamma$ is differentiable with the following Jaccobian matrix,
\[
D\Gamma = \left( \begin{array}{ccc} 1 & -\psi\star\rho & 0 \\ 0 & 1 & 0 \\ 0 & \psi \star  (\rho u)& 1 \end{array} \right),
\]
where $\rho u = \int_\R \omega f\,d\omega$, and this gives $det(D\Gamma) = 1 \neq 0$. Note that for $\varphi = \varphi(x,v,t) \in \mc^1_0(\R \times \R \times [0,T])$, we apply the chain rule to obtain that 
\begin{align*}
&\pa_t (\varphi \circ \Gamma) = (\pa_t \varphi) \circ \Gamma + (\psi \star (\rho u)) (\pa_v \varphi) \circ \Gamma,\\ 
&\pa_x  (\varphi \circ \Gamma)  = (\pa_x \varphi) \circ \Gamma - (\psi \star \rho) (\pa_v \varphi) \circ \Gamma.
\end{align*}
We let $\mu_t$ to be the measure valued solution to \eqref{main_eq}. Then  we consider a test function $\varphi \circ \Gamma$ in Definition \ref{D4.1} (3) and obtain that 
$$\begin{aligned}
&\intrr \varphi(\Gamma(x,\omega,t)) \,\mu_t(dxd\omega) - \intrr \varphi(\Gamma(x,\omega,0)) \,\mu_0(dxd\omega)\cr
&\quad = \int_0^t \intrr \lt(\pa_s (\varphi \circ \Gamma) + (\omega - \Psi \star \rho) \pa_x (\varphi \circ \Gamma)\rt)\, \mu_s(dxd\omega)ds \cr
&\quad = \int_0^t \intrr (\pa_s \varphi) \circ \Gamma + (\psi \star (\rho u)) (\pa_v \varphi) \circ \Gamma \cr
&\hspace{3cm} + (\omega - \Psi \star \rho) ((\pa_x \varphi) \circ \Gamma - (\psi \star \rho) (\pa_v \varphi) \circ \Gamma)\, \mu_s(dxd\omega)ds\cr
&\quad = \int_0^t \intrr \Big[(\pa_s \varphi) +(\omega - \Psi \star \rho) (\pa_x \varphi)  + \left(\psi \star (\rho u) -(\omega - \Psi \star \rho) (\psi \star \rho)\right) (\pa_v \varphi)\Big] \circ \Gamma\, \mu_s(dxd\omega)ds\cr
&\quad = \int_0^t \intrr \lt(\pa_s \varphi  + v \pa_x \varphi + (\psi \star (\rho u) - v(\psi\star\rho)) \pa_v \varphi   \rt) (\Gamma \# \mu_s)(dxdv)ds.
\end{aligned}$$
This asserts that $\Gamma(\cdot, \cdot,t) \# \mu_t$ satisfies the kinetic equation \eqref{k_CS} in the sense of distributions.\newline

\section{Uniform-in-time stability estimates}\label{sec:3}
\setcounter{equation}{0}

In this section, we present our result on the uniform-in-time stability estimate for the singular CS particle system \eqref{p_CS}. For this, we first estimate the uniform-in-time stability of solutions for the first-order particle system \eqref{D1}. We then extend it to the system \eqref{p_CS}. 

\subsection{Uniform-in-time stability for the first-order system \eqref{D1}}\label{ssec:first}
For two solutions $X$ and $Y$ to the equation \eqref{D1}, let us define a modulated $\ell_p$ distance between $X$ and $Y$ as
\[
\mathcal{X}(t):=\left(\frac{1}{N}\sum_{i=1}^N((x_i(t)-x_c-\omega_ct)-(y_i(t)-y_c-\omega_ct))^p\right)^{\frac{1}{p}},
\]
where the averages quantities $x_c,\, y_c, \,\omega_c$, and $w_c$ are given below,
\[
x_c = \frac1N \sum_{i=1}^N x^0_i, \quad y_c = \frac1N \sum_{i=1}^Ny^0_i, \quad \omega_c = \frac1N \sum_{i=1}^N \omega_i, \quad \mbox{and} \quad w_c = \frac1N \sum_{i=1}^N w_i.
\]
Then the uniform-in-time stability of solutions for the system \eqref{D1} is as follows.

\begin{theorem}\label{L4.1} Let $X$ and $Y$ be global solutions to the equation \eqref{D1} with initial data $X_0$ and $Y_0$, natural velocities $(\omega_i)$ and $(w_i)$, respectively. Then we have 
\[
\mathcal{X}(t)\leq e^{-\psi(2D_0)t}\mathcal{X}(0)+\frac{\mathcal{U}}{\psi(2D_0)}  \quad \forall\, t \geq 0,
\]
where the constant $D_0 > 0$ and $\mathcal{U}$ are given as follows,
\[
D_0 :=\max\lt\{D_x(0),D_y(0), \Psi^{-1}(D_{\nu}), \Psi^{-1}(D_{\omega})\rt\},\quad \mathcal{U}:=\left(\frac{1}{N}\sum_{i=1}^N|(\omega_i-\omega_c)-(w_i-w_c)|^p\right)^{\frac{1}{p}}.
\]
\end{theorem}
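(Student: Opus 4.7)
The plan is to differentiate $\mathcal{X}(t)^p$ in time, use the standard Cucker--Smale $(i,j)$-symmetrization to extract a coercive interaction term, and then apply the uniform diameter bound from Theorem \ref{T3.1} to replace the singular weight $\psi$ by the positive constant $\psi(2D_0)$, thereby closing a scalar differential inequality in $\mathcal{X}$ that Gr\"onwall integrates explicitly.

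First I would remove the mean motion by setting $\tilde{x}_i(t) := x_i(t) - x_c - \omega_c t$, $\tilde{y}_i(t) := y_i(t) - y_c - w_c t$, and $e_i := \tilde{x}_i - \tilde{y}_i$ (reading $w_c t$ in place of $\omega_c t$ on the $y$-side of the defining sum for $\mathcal{X}$, so the fluctuations are genuinely mean-zero). Since $\Psi$ is odd, differences are unchanged by translation, so $\tilde{x}$ and $\tilde{y}$ each solve systems of the form \eqref{D1} with mean-zero natural velocities $\omega_i-\omega_c$ and $w_i-w_c$. Theorem \ref{T3.1} then yields the uniform diameter bounds $|\tilde{x}_j-\tilde{x}_i|, |\tilde{y}_j-\tilde{y}_i|\leq D_0$ for all $t\geq 0$, and the centering forces $\frac{1}{N}\sum_i e_i(t)\equiv 0$ for all $t$, which will be essential. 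For $p\in[1,\infty)$ a direct computation then gives
\[
\frac{1}{p}\frac{d}{dt}\mathcal{X}^p \;=\; \frac{1}{N}\sum_i |e_i|^{p-1}\mathrm{sgn}(e_i)\,\eta_i \;+\; \frac{1}{N^2}\sum_{i,j}|e_i|^{p-1}\mathrm{sgn}(e_i)\bigl[\Psi(\tilde{x}_j-\tilde{x}_i)-\Psi(\tilde{y}_j-\tilde{y}_i)\bigr],
\]
with $\eta_i := (\omega_i-\omega_c)-(w_i-w_c)$; the first sum is bounded by $\mathcal{X}^{p-1}\mathcal{U}$ via H\"older.

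For the interaction sum, I would symmetrize in $(i,j)$ and apply the mean value theorem, writing $\Psi(\tilde{x}_j-\tilde{x}_i)-\Psi(\tilde{y}_j-\tilde{y}_i) = \psi(\xi_{ij})(e_j-e_i)$ with $|\xi_{ij}|\leq D_0$. Since $a\mapsto |a|^{p-1}\mathrm{sgn}(a)$ is non-decreasing, the quadratic form $(|e_i|^{p-1}\mathrm{sgn}(e_i)-|e_j|^{p-1}\mathrm{sgn}(e_j))(e_i-e_j)$ is non-negative, so monotonicity of $\psi$ permits the replacement $\psi(\xi_{ij})\mapsto \psi(2D_0)$ with the correct sign. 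Expanding the symmetrized sum and using $\bar{e}=0$ gives the identity
\[
\frac{1}{2N^2}\sum_{i,j}\bigl(|e_i|^{p-1}\mathrm{sgn}(e_i)-|e_j|^{p-1}\mathrm{sgn}(e_j)\bigr)(e_i-e_j) \;=\; \mathcal{X}^p,
\]
which collapses the interaction term to $-p\,\psi(2D_0)\,\mathcal{X}^p$. Together with the H\"older bound this yields $\frac{d}{dt}\mathcal{X}\leq -\psi(2D_0)\,\mathcal{X} + \mathcal{U}$, and Gr\"onwall produces the stated estimate; the endpoint $p=\infty$ follows by taking $p\to\infty$ in the $L^p$ bound. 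The main technical obstacle is the non-smoothness of $|e_i|^p$ at $e_i=0$ (with exploding second derivative for $p\in[1,2)$), which I would handle by regularizing with $(\varepsilon^2+e_i^2)^{p/2}$ and passing $\varepsilon\to 0$, or equivalently by working with the one-sided Dini derivative of $\mathcal{X}^p$; neither modification affects the final scalar inequality.
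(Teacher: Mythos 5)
Your proof is correct and follows essentially the same route as the paper: recenter to enforce mean-zero fluctuations, differentiate $\mathcal{X}^p$, bound the natural-velocity term by H\"older, symmetrize the interaction sum in $(i,j)$, use the mean-value bound $\psi(\xi_{ij})\ge\psi(2D_0)$ coming from the uniform diameter bound of Theorem \ref{T3.1} together with monotonicity of $s\mapsto|s|^{p-2}s$, expand the resulting quadratic form using $\sum_i e_i=0$ to recover $\mathcal{X}^p$, and close with Gr\"onwall. Your extra care about the $w_c t$ centering on the $y$-side and about regularizing $|e_i|^p$ near $e_i=0$ addresses minor gaps the paper leaves implicit, but does not change the argument.
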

\begin{proof}
Without loss of generality, we may assume the mean zero property of the spatial variables and natural velocities, i.e., 
\[\sum_{i=1}^Nx_i^0=\sum_{i=1}^Ny_i^0=\sum_{i=1}^N\omega_i^0=\sum_{i=1}^N w_i^0=0.\]
Then, according to previous analysis in Theorem \ref{T3.1}, the diameter of both $X$ and $Y$ are bounded by $D_0$. Then, the difference between $x_i$ and $y_i$ satisfies the following equation,
\begin{align*}
\begin{aligned}
\frac{1}{p}\frac{d}{dt}|x_i-y_i|^p&=|x_i-y_i|^{p-2}(x_i-y_i)(\omega_i-w_i)\\
&\quad + |x_i-y_i|^{p-2}(x_i-y_i)\left(\frac1N \sum_{j=1}^N \Psi(x_j - x_i)- \frac1N \sum_{j=1}^N \Psi(y_j - y_i)\right).
\end{aligned}
\end{align*}
Then we add all equations for each $i$-th particle and obtain for any integer $p$ that,
\begin{align}\label{D4.2}
\begin{aligned}
&\frac{1}{p}\frac{d}{dt}\sum_{i=1}^N|x_i-y_i|^p\\
&\quad =\sum_{i=1}^N |x_i-y_i|^{p-2}(x_i-y_i)(\omega_i-w_i)\\
&\qquad + \sum_{i=1}^N |x_i-y_i|^{p-2}(x_i-y_i)\left(\frac1N \sum_{j=1}^N \Psi(x_j - x_i)- \frac1N \sum_{j=1}^N \Psi(y_j - y_i)\right)\\
&\quad \leq \lt(\sum_{i=1}^N|x_i-y_i|^p\rt)^{\frac{p-1}{p}}\lt(\sum_{i=1}^N|\omega_i- w_i|^p\rt)^{\frac{1}{p}}\\
&\qquad + \sum_{i=1}^N |x_i-y_i|^{p-2}(x_i-y_i)\left(\frac1N \sum_{j=1}^N \Psi(x_j - x_i)- \frac1N \sum_{j=1}^N \Psi(y_j - y_i)\right).
\end{aligned}
\end{align}
For the last term in \eqref{D4.2}, we applied the odd property of $\Psi$ and the symmetry property of the summation to obtain that 
\begin{align}\label{D4.3}
\begin{aligned}
&\sum_{i=1}^N |x_i-y_i|^{p-2}(x_i-y_i)\left(\frac1N \sum_{j=1}^N \Psi(x_j - x_i)- \frac1N \sum_{j=1}^N \Psi(y_j - y_i)\right)\\
&\quad =-\frac{1}{2N}\sum_{i,j}\left(|x_j-y_j|^{p-2}(x_j-y_j)-|x_i-y_i|^{p-2}(x_i-y_i)\right)\left( \Psi(x_j - x_i)-  \Psi(y_j - y_i)\right).
\end{aligned}
\end{align}
It is obviously that $|s|^ps$ is an increasing function of $s\in\bbr$. Now we assume $(x_j - x_i)\geq(y_j - y_i)$, then we immediately obtain that $(x_j-y_j)\geq(x_i-y_i)$. Therefore, we combine the increasing property of $|s|^ps$ to conclude that \eqref{D4.3} is always non-positive. Now we apply the increasing property of $\Psi$, Lagrangian remainder theory, the uniform upper bound of spatial diameter $D_0$ and the mean zero assumption to obtain 
\begin{align}\label{D4.4}
\begin{aligned}
&\sum_{i=1}^N  |x_i-y_i|^{p-2}(x_i-y_i)\left(\frac1N \sum_{j=1}^N \Psi(x_j - x_i)- \frac1N \sum_{j=1}^N \Psi(y_j - y_i)\right)\\
&\quad =-\frac{1}{2N}\sum_{i,j}\left(|x_j-y_j|^{p-2}(x_j-y_j)-|x_i-y_i|^{p-2}(x_i-y_i)\right)\left( \Psi(x_j - x_i)-  \Psi(y_j - y_i)\right)\\
&\quad \leq -\frac{1}{2N}\sum_{i,j}\left(|x_j-y_j|^{p-2}(x_j-y_j)-|x_i-y_i|^{p-2}(x_i-y_i)\right)(x_j-x_i-(y_j - y_i))\psi(2D_0)\\
&\quad \leq -\psi(2D_0)\sum_{i=1}^N|x_i-y_i|^p.
\end{aligned}
\end{align}
For the other case $(x_j - x_i)\leq(y_j - y_i)$, we can use similar analysis to obtain the same estimate. Thus we apply \eqref{D4.2} and \eqref{D4.4} to imply that 

\begin{align}\label{D4.5}
\begin{aligned}
&\frac{1}{p}\frac{d}{dt}\sum_{i=1}^N|x_i-y_i|^p\leq \lt(\sum_{i=1}^N|x_i-y_i|^p\rt)^{\frac{p-1}{p}}\lt(\sum_{i=1}^N|\omega_i-w_i|^p\rt)^{\frac{1}{p}}-\psi(2D_0)\sum_{i=1}^N|x_i-y_i|^p.
\end{aligned}
\end{align}
Now, we define use the definition of $\mathcal{X}$ and $\mathcal{U}$ and apply \eqref{D4.5} to obtain 
\begin{equation*}
\frac{d}{dt}\mathcal{X}\leq \mathcal{U}-\psi(2D_0)\mathcal{X}.
\end{equation*}
Then, by simple calculation, we obtain the estimate of $\mathcal{X}$ as 
\begin{equation*}
\mathcal{X}(t)\leq e^{-\psi(2D_0)t}\mathcal{X}(0)+\frac{\mathcal{U}}{\psi(2D_0)}.
\end{equation*}
Now, for general initial data and natural velocity, we notice the conservation law of the mean position and mean natural velocity. Therefore we make change of variable as follows,
\[\bar{x}=x-x_c,\quad \bar{y}=y-y_c,\quad \bar{\omega}=\omega-\omega_c\quad \mbox{and} \quad \bar{w}=w-w_c.\]
Then for $\bar{x}$, $\bar{y}$, $\bar{\omega}$, $\bar{w}$, we can apply above analysis to obtain the stability and finish the proof of the lemma.
\end{proof}

\begin{remark} If two solutions $X$ and $Y$ have the same natural velocities, then the modulated $\ell_p$ distance of natural velocities $\mathcal{U}$ appeared in Theorem \ref{L4.1} becomes zero. Thus Theorem \ref{L4.1} provides the following exponential contractivity estimate of solutions:
\[
\mathcal{X}(t)\leq e^{-\psi(2D_0)t}\mathcal{X}(0)
\]
for $t \geq 0$. Later in Section \ref{sec:contr}, we present the contractivity estimate of solutions for the continuum equation \eqref{main_eq}. 
\end{remark}

\subsection{Uniform-in-time stability for the singular CS model \eqref{p_CS}}\label{ssec:CS}
In this part, we investigate the uniform-in-time stability estimate for the singular CS particle system \eqref{p_CS}. We first begin with the auxiliary lemma which plays a crucial role in estimating the stability of solutions to \eqref{p_CS}.

\begin{lemma}\label{LC1}
For any $i,j \in \{1,\dots, N\}$ and the interaction $\Psi$ appeared in \eqref{eq_Psi}, we have 
\begin{equation}\label{3.11}
|\Psi(x_j-x_i)-\Psi(\bar{x}_j-\bar{x}_i)|\leq2\left(|\Psi(x_j-\bar{x}_j)|+|\Psi(x_i-\bar{x}_i)|\right).
\end{equation}
\end{lemma}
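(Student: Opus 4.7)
The plan is to exploit three elementary properties of the antiderivative $\Psi(x)=\mathrm{sgn}(x)|x|^{1-\beta}/(1-\beta)$ appearing in \eqref{eq_Psi}: (i) $\Psi$ is odd, (ii) $\Psi$ is strictly increasing, and (iii) the map $|\Psi|$ is subadditive, meaning
\[
|\Psi(a+b)|\le |\Psi(a)|+|\Psi(b)|\quad\text{for all }a,b\in\R,
\]
which follows from the concavity (equivalently, the subadditivity of $r\mapsto r^{1-\beta}$) on $[0,\infty)$ since $1-\beta\in(0,1)$, together with $|a+b|\le|a|+|b|$. The same concavity also yields the standard estimate $|r^{1-\beta}-s^{1-\beta}|\le |r-s|^{1-\beta}$ for $r,s\ge 0$, equivalently
\[
|\Psi(r)-\Psi(s)|\le |\Psi(r-s)|\quad\text{whenever $r,s$ have the same sign.}
\]

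Once these are in hand, I would set $u:=x_j-x_i$, $v:=\bar x_j-\bar x_i$, $s:=x_j-\bar x_j$, $t:=x_i-\bar x_i$, so that $u-v=s-t$. The proof then splits into two cases according to the signs of $u$ and $v$.

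If $uv\ge 0$ (same sign case, possibly with one equal to zero), the same-sign estimate above combined with subadditivity gives
\[
|\Psi(u)-\Psi(v)|\le |\Psi(u-v)|=|\Psi(s-t)|\le |\Psi(s)|+|\Psi(t)|,
\]
which is even stronger than the claimed bound. If $uv<0$, then $|u-v|=|u|+|v|$, so $|u|\le|u-v|=|s-t|\le|s|+|t|$ and similarly $|v|\le |s|+|t|$; the monotonicity of $|\Psi|$ on $[0,\infty)$ together with subadditivity gives $|\Psi(u)|\le |\Psi(s)|+|\Psi(t)|$ and the same for $|\Psi(v)|$, and since $u,v$ have opposite signs,
\[
|\Psi(u)-\Psi(v)|=|\Psi(u)|+|\Psi(v)|\le 2\bigl(|\Psi(s)|+|\Psi(t)|\bigr).
\]
Combining the two cases yields \eqref{3.11}.

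\noindent\textbf{Main difficulty.} No single step is technically hard, but the subtle point is that a naive triangle-inequality approach, writing $x_j-x_i=(\bar x_j-\bar x_i)+(x_j-\bar x_j)-(x_i-\bar x_i)$ and applying subadditivity directly, produces a spurious $|\Psi(\bar x_j-\bar x_i)|$ term on the right-hand side, which is exactly what the lemma forbids. The case split above avoids this: in the same-sign case we gain from the H\"older-type contraction $|\Psi(u)-\Psi(v)|\le |\Psi(u-v)|$, while in the opposite-sign case the identity $|u-v|=|u|+|v|$ lets us control $|\Psi(u)|$ and $|\Psi(v)|$ individually by $|\Psi(s)|+|\Psi(t)|$. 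The factor $2$ in the statement is precisely the price of this opposite-sign case.
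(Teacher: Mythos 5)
Your argument is correct. Both you and the paper split on whether $u:=x_j-x_i$ and $v:=\bar x_j-\bar x_i$ have the same sign, and both lean on the concavity/subadditivity of $r\mapsto r^{1-\beta}$ and the identity $u-v=(x_j-\bar x_j)-(x_i-\bar x_i)=:s-t$, so the high-level strategy is the same. Where you differ is in execution, and your version is noticeably leaner. In the same-sign case the paper further splits on the signs of $s$ and $t$, obtaining either a $\max$ bound or a sum bound; you skip this refinement and simply apply subadditivity to $|\Psi(s-t)|$, which loses a little sharpness but still lands within the stated factor. The real gain is in the opposite-sign case: the paper works through six subcases depending on the relative ordering of $x_i,x_j,\bar x_i,\bar x_j$, with one subcase requiring a separate integral argument using the monotonicity of $\psi$. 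You replace all of this with the single observation that opposite signs force $|u|+|v|=|u-v|=|s-t|\le|s|+|t|$, after which monotonicity and subadditivity of $|\Psi|$ bound each of $|\Psi(u)|,|\Psi(v)|$ by $|\Psi(s)|+|\Psi(t)|$, and the opposite signs of $\Psi(u),\Psi(v)$ turn the difference into a sum. This is cleaner, makes the origin of the factor $2$ transparent, and since the lemma only asserts the factor-$2$ bound, nothing is lost.
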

\begin{proof}
We will split the proof into two cases.

\noindent $\bullet$ (Case 1) $sgn(x_j-x_i)=sgn(\bar{x}_j-\bar{x}_i)$. In this case, without loss of generality, we may assume 
\[
(x_j-x_i)\geq (\bar{x}_j-\bar{x}_i)\geq 0.
\] 
Then we apply the concavity of $\Psi$ to obtain 
\[(x_j-\bar{x}_j)\geq (x_i-\bar{x}_i)\quad \mbox{and} \quad |\Psi(x_j-x_i)-\Psi(\bar{x}_j-\bar{x}_i)|\leq|\Psi(x_j-x_i-(\bar{x}_j-\bar{x}_i))|.\]
Now, if 
\[
(x_j-\bar{x}_j)\leq 0 \quad \mbox{or} \quad (x_i-\bar{x}_i)\geq 0, 
\]
we immediately find
\begin{equation}\label{3.12}
|\Psi(x_j-x_i-(\bar{x}_j-\bar{x}_i))|\leq \max\left\{|\Psi(x_j-\bar{x}_j)|,|\Psi(x_i-\bar{x}_i)|\right\}.
\end{equation}
On the other hand, if $(x_j-\bar{x}_j)\geq 0\geq(x_i-\bar{x}_i)$, we apply the concavity of $\Psi$ to have 
\begin{equation}\label{3.13}
|\Psi(x_j-x_i-(\bar{x}_j-\bar{x}_i))|\leq|\Psi(x_j-\bar{x}_j)|+|\Psi(x_i-\bar{x}_i)|.
\end{equation}

\noindent $\bullet$ (Case 2) $sgn(x_j-x_i)\neq sgn(\bar{x}_j-\bar{x}_i)$. In this case, without loss of generality, we may assume 
\[
x_j-x_i\geq 0\geq \bar{x}_j-\bar{x}_i. 
\]
Then we consider six subcases respectively below.

If 
\[
x_j\geq x_i\geq \bar{x}_i\geq \bar{x}_j \quad \mbox{or} \quad x_j\geq \bar{x}_i\geq x_i\geq \bar{x}_j,
\] 
then we apply the increasing property of $|\Psi|$ to get
\begin{equation}\label{3.14}
|\Psi(x_j-x_i)-\Psi(\bar{x}_j-\bar{x}_i)|\leq |\Psi(x_j-x_i)|+|\Psi(\bar{x}_j-\bar{x}_i)|\leq 2|\Psi(x_j-\bar{x}_j)|.
\end{equation}
Similarly, if 
\[
\bar{x}_i\geq x_j\geq \bar{x}_j\geq x_i \quad \mbox{or} \quad \bar{x}_i\geq \bar{x}_j\geq x_j\geq x_i,
\] 
then we have 
\begin{equation}\label{3.15}
|\Psi(x_j-x_i)-\Psi(\bar{x}_j-\bar{x}_i)|\leq |\Psi(x_j-x_i)|+|\Psi(\bar{x}_j-\bar{x}_i)|\leq 2|\Psi(x_i-\bar{x}_i)|.
\end{equation}
Moreover, for the case $x_j\geq \bar{x}_i\geq \bar{x}_j\geq x_i$, we first note that 
\[(x_j-x_i)-(x_j-\bar{x}_j)=(\bar{x}_i-x_i)-(\bar{x}_i-\bar{x}_j)\quad \mbox{and} \quad (x_j-\bar{x}_j)\geq (\bar{x}_i-\bar{x}_j).\]
Then, we use the fact that $\psi$ is decreasing on the positive real line to obtain 
\begin{align*}
&\left[\Psi(\bar{x}_i-x_i)-\Psi(\bar{x}_i-\bar{x}_j)\right]-\left[\Psi(x_j-x_i)-\Psi(x_j-\bar{x}_j)\right]\\
&\quad \leq \int_{\bar{x}_i-\bar{x}_j}^{\bar{x}_i-x_i}\psi(s)\,ds- \int_{x_j-\bar{x}_j}^{x_j-x_i}\psi(s)\,ds= \int_{\bar{x}_i-\bar{x}_j}^{\bar{x}_i-x_i}\lt(\psi(s)ds-\psi(s+x_j-\bar{x}_i)\rt)ds\geq 0,
\end{align*}
which is equivalent to that 
\begin{equation}\label{3.16}
|\Psi(x_j-x_i)|+|\Psi(\bar{x}_i-\bar{x}_j)|\leq |\Psi(x_j-\bar{x}_j)|+ |\Psi(\bar{x}_i-x_i)|.
\end{equation}
Finally, we can apply almost the same argument to the case $\bar{x}_i\geq x_j\geq x_i\geq \bar{x}_j$, and this gives the same inequality as \eqref{3.16}. Thus we combine \eqref{3.12}, \eqref{3.13}, \eqref{3.14}, \eqref{3.15}, and \eqref{3.16} to conclude the desired result \eqref{3.11}. 
\end{proof}

We now provide the uniform-in-time stability estimate for the singular CS model \eqref{p_CS}.

\begin{theorem}\label{L3.2} Let $(X,V)$ and $(\bar{X},\bar{V})$ be global solutions to the equation \eqref{p_CS} with initial data $(X_0,V_0)$ and $(\bar{X}_0,\bar{V}_0)$, respectively. Then there exists a positive constant $C$ independent of $t$ and $N$ such that 
\begin{align*}
&\mathcal{X}(t)\leq C\left(\mathcal{X}(0)+\mathcal{V}(0)+\mathcal{X}(0)^{1-\beta}\right),\\
&\mathcal{V}(t)\leq C\left(\mathcal{V}(0)+\mathcal{X}(0)^{1-\beta}+\left(\mathcal{X}(0)+\mathcal{V}(0)+\mathcal{X}(0)^{1-\beta}\right)^{1-\beta}\right),
\end{align*}
where $\mathcal{V} = \mathcal{V}(t)$ is given by $\mathcal{V}:=\left(\frac{1}{N}\sum_{i=1}^N|(v_i-\bar{v}_i)|^p\right)^{\frac{1}{p}}$.
\end{theorem}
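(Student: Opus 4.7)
The plan is to exploit the equivalence between the second-order CS system \eqref{p_CS} and the first-order reformulation \eqref{D1}, so that the stability of the singular CS particle system reduces to Theorem \ref{L4.1} plus control of the ``cost'' of passing between the second- and first-order pictures. Concretely, since the CS interaction is anti-symmetric, $\sum_i v_i(t)$ and $\sum_i x_i(t) - t \sum_i v_i$ are conserved, so I may assume without loss of generality that $\sum_i x_i^0 = \sum_i v_i^0 = 0$ and likewise for $(\bar X_0, \bar V_0)$. Then I would associate to each solution its natural velocities
\[
\omega_i = v_i^0 - \frac{1}{N}\sum_{j=1}^N \Psi(x_j^0 - x_i^0), \qquad w_i = \bar v_i^0 - \frac{1}{N}\sum_{j=1}^N \Psi(\bar x_j^0 - \bar x_i^0),
\]
noting that $\sum_i \omega_i = \sum_i w_i = 0$ by oddness of $\Psi$, and observe that $X(t), \bar X(t)$ solve the first-order system \eqref{D1} with these natural velocities. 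Theorem \ref{L4.1} then directly yields
\[
\mathcal{X}(t) \leq e^{-\psi(2D_0)t}\mathcal{X}(0) + \frac{\mathcal{U}}{\psi(2D_0)}, \qquad t \geq 0,
\]
so the whole game is to express $\mathcal{U}$ in terms of $\mathcal{X}(0)$ and $\mathcal{V}(0)$.

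For this, I would write $\omega_i - w_i = (v_i^0 - \bar v_i^0) - \frac{1}{N}\sum_j [\Psi(x_j^0 - x_i^0) - \Psi(\bar x_j^0 - \bar x_i^0)]$ and apply Lemma \ref{LC1} together with the Hölder-type bound $|\Psi(z)| \leq C|z|^{1-\beta}$ to obtain the pointwise estimate
\[
|\omega_i - w_i| \leq |v_i^0 - \bar v_i^0| + \frac{C}{N}\sum_{j=1}^N |x_j^0 - \bar x_j^0|^{1-\beta} + C|x_i^0 - \bar x_i^0|^{1-\beta}.
\]
Taking discrete $\ell_p$-means on both sides, Jensen's inequality for the concave map $s \mapsto s^{1-\beta}$ and the power-mean inequality $\|\cdot\|_{\ell_{p(1-\beta)}} \leq \|\cdot\|_{\ell_p}^{\,\cdot\,}$ (properly rescaled) convert each positional term into $C\mathcal{X}(0)^{1-\beta}$, giving $\mathcal{U} \leq C(\mathcal{V}(0) + \mathcal{X}(0)^{1-\beta})$. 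Plugging this into the $\mathcal{X}(t)$ bound yields the first claim of Theorem \ref{L3.2}; here the constant absorbs $\psi(2D_0)^{-1}$, which is finite since $D_0$ is determined by the initial configurations alone.

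For the velocity stability, I would use the identity $v_i(t) = \omega_i + \frac{1}{N}\sum_j \Psi(x_j(t) - x_i(t))$ (and its analogue for $\bar v_i$) from the first-order reformulation to write
\[
v_i(t) - \bar v_i(t) = (\omega_i - w_i) + \frac{1}{N}\sum_{j=1}^N \bigl[\Psi(x_j(t) - x_i(t)) - \Psi(\bar x_j(t) - \bar x_i(t))\bigr],
\]
and then rerun exactly the same Lemma \ref{LC1}/Jensen argument, but now with positions evaluated at time $t$. This produces the bound $\mathcal{V}(t) \leq C\mathcal{U} + C\mathcal{X}(t)^{1-\beta}$. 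Substituting the already-proved estimate for $\mathcal{X}(t)$ and for $\mathcal{U}$ gives the second inequality of Theorem \ref{L3.2}.

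The key technical obstacle is the crossover between second- and first-order quantities: the natural-velocity gap $\omega_i - w_i$ and the current velocity gap $v_i(t) - \bar v_i(t)$ both contain differences of $\Psi$-values of positions, and $\Psi(z) \sim |z|^{1-\beta}$ is only Hölder, not Lipschitz, near the origin. A naive mean-value estimate of $\Psi(x_j - x_i) - \Psi(\bar x_j - \bar x_i)$ would require a lower bound on $|x_j - x_i|$ and $|\bar x_j - \bar x_i|$, which is unavailable for measure-valued initial data and indeed fails at collisions. Lemma \ref{LC1} is designed precisely to avoid this by giving a bound in terms of $|\Psi(x_i - \bar x_i)|$ and $|\Psi(x_j - \bar x_j)|$ only, and this is what forces (and makes possible) the exponent $1-\beta$ appearing in both stability bounds.
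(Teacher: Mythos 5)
Your proof is correct and follows essentially the same route as the paper's: both pass to natural velocities via the first-order reformulation, bound the natural-velocity gap $\mathcal{U}$ by $C\bigl(\mathcal{V}(0)+\mathcal{X}(0)^{1-\beta}\bigr)$ using Lemma~\ref{LC1} together with Jensen/concavity, and then invoke Theorem~\ref{L4.1} for the position estimate and the identity $v_i(t)=\omega_i+\frac{1}{N}\sum_j\Psi(x_j(t)-x_i(t))$ for the velocity estimate.
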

\begin{proof}
According to the derivation of \eqref{D1}, we obtain the relation between the natural velocity $\omega_i$ and the state variables $(x_i,v_i)$. More precisely, we have
\begin{equation}\label{C11}
\omega_i=v_i(t)-\frac1N \sum_{j=1}^N \Psi(x_j(t) - x_i(t)),\quad t\geq 0.
\end{equation}
Then we apply the results in Theorem \ref{L4.1} to have the estimates of the $\mathcal{U}$. In fact, without loss of generality, we can apply the zero mean assumptions, the triangle inequality to obtain that 
\begin{equation}\label{C18}
\begin{aligned}
&\left(\frac{1}{N}\sum_{i=1}^N|\omega_i-\bar{\omega}_i|^p\right)^{\frac{1}{p}}\\
&\quad =\left(\frac{1}{N}\sum_{i=1}^N\lt|v_i(0)-\bar{v}_i(0)-\frac1N \sum_{j=1}^N \Psi(x_j(0) - x_i(0))+\frac1N \sum_{j=1}^N \Psi(\bar{x}_j(0) - \bar{x}_i(0))\rt|^p\right)^{\frac{1}{p}}\\
&\quad \leq \left(\frac{1}{N}\sum_{i=1}^N\left|v_i(0)-\bar{v}_i(0)\right|^p\right)^{\frac{1}{p}} +\left(\frac{1}{N}\sum_{i=1}^N\lt|\frac1N \sum_{j=1}^N \Psi(x_j(0) - x_i(0))-\frac1N \sum_{j=1}^N \Psi(\bar{x}_j(0) - \bar{x}_i(0))\rt|^p\right)^{\frac{1}{p}}.
\end{aligned}
\end{equation}
The first term in \eqref{C18} is simply $\mathcal{V}(0)$, then we apply \eqref{3.11} in Lemma \ref{LC1} and the convexity to obtain that 
\begin{equation}\label{C19}
\begin{aligned}
&\left(\frac{1}{N}\sum_{i=1}^N\Big|\frac1N \sum_{j=1}^N \Psi(x_j(0) - x_i(0))-\frac1N \sum_{j=1}^N \Psi(\bar{x}_j(0) - \bar{x}_i(0))\Big|^p\right)^{\frac{1}{p}}\\
&\quad \leq \left(\frac{1}{N}\sum_{i=1}^N\left(\frac 1 N \sum_{j=1}^N |\Psi(x_j(0) - x_i(0))-\Psi(\bar{x}_j(0) - \bar{x}_i(0))|\right)^p\right)^{\frac{1}{p}}\\
&\quad \leq \left(\frac{1}{N}\sum_{i=1}^N\left(\frac 2 N \sum_{j=1}^N \left(|\Psi(x_j(0) - \bar{x}_j(0))|+|\Psi(x_i(0) - \bar{x}_i(0))|\right)\right)^p\right)^{\frac{1}{p}}\\
&\quad \leq \left(\frac{2^p}{N^2}\sum_{i=1}^N\sum_{j=1}^N\left( |\Psi(x_j(0) - \bar{x}_j(0))|+|\Psi(x_i(0) - \bar{x}_i(0))|\right)^p\right)^{\frac{1}{p}}\\
&\quad \leq \left(\frac{2^{2p-1}}{N^2}\sum_{i=1}^N\sum_{j=1}^N\left( |\Psi(x_j(0) - \bar{x}_j(0))|^p+|\Psi(x_i(0) - \bar{x}_i(0))|^p\right)\right)^{\frac{1}{p}}\\
&\quad \leq \left(\frac{2^{2p}}{N}\sum_{i=1}^N |\Psi(x_j(0) - \bar{x}_j(0))|^p\right)^{\frac{1}{p}}\\
&\quad =\frac{4}{1-\beta} \left(\frac{1}{N}\sum_{i=1}^N |x_j(0) - \bar{x}_j(0)|^{p(1-\beta)}\right)^{\frac{1}{p}},
\end{aligned}
\end{equation}
where we apply the exact form of  $\Psi$ in the last equality above. Therefore, we apply the concavity of $\Psi(x)$ for $x \geq 0$ to obtain 
\begin{equation}\label{C20}
\begin{aligned}
\frac{4}{1-\beta} \left(\frac{1}{N}\sum_{i=1}^N |x_j(0) - \bar{x}_j(0)|^{p(1-\beta)}\right)^{\frac{1}{p}}&\leq \frac{4}{1-\beta} \left(\frac{1}{N}\sum_{i=1}^N |x_j(0) - \bar{x}_j(0)|^{p}\right)^{\frac{(1-\beta)}{p}}= \frac{4}{1-\beta}\mathcal{X}_0^{1-\beta}.
\end{aligned}
\end{equation}
Now, according to \eqref{C11}, similar argument can be applied to any $t>0$. Therefore we combine \eqref{C18}, \eqref{C19} and \eqref{C20} to obtain that 
\begin{equation}\label{C21}
\mathcal{U}\leq \mathcal{V}(t)+ \frac{4}{1-\beta}\mathcal{X}(t)^{1-\beta},\quad t\geq 0.
\end{equation}
On the other hand, according to \eqref{C11}, the velocity $v_i$ can be expressed as below,
\[v_i(t)=\omega_i+\frac1N \sum_{j=1}^N \Psi(x_j(t) - x_i(t)),\quad t\geq 0.\]
Therefore, we may apply the same arguments above to the estimates of velocity variable and obtain that 
\begin{equation}\label{C22}
\mathcal{V}(t)\leq \mathcal{U}+\frac{4}{1-\beta}\mathcal{X}(t)^{1-\beta},\quad t\geq 0.
\end{equation}
Then, we apply Theorem \ref{L4.1} and \eqref{C21} to obtain the estimate of $\mathcal{X}$ as follows,
\begin{equation}\label{C23}
\mathcal{X}(t)\leq e^{-\psi(2D_0)t}\mathcal{X}(0)+\frac{\mathcal{U}}{\psi(2D_0)}\leq C\left(\mathcal{X}(0)+\mathcal{V}(0)+\mathcal{X}(0)^{1-\beta}\right).
\end{equation}
Finally, we combine \eqref{C21}, \eqref{C22} and \eqref{C23} to obtain the estimate of $\mathcal{V}$ as below,
\[\mathcal{V}(t)\leq C\left(\mathcal{V}(0)+\mathcal{X}(0)^{1-\beta}+\left(\mathcal{X}(0)+\mathcal{V}(0)+\mathcal{X}(0)^{1-\beta}\right)^{1-\beta}\right).\]
\end{proof}
%
%
%
%
%
%
%
%

\section{Measure-valued solutions and mean-field limit for singular CS model}\label{sec:measure}
In this section, we provide the details on the construction of the measure-valued solutions to the kinetic equation \eqref{k_CS}. As mentioned before, we apply the uniform-in-time stability estimates of solutions for the particle system \eqref{p_CS}. We also establish the rigorous mean-field limit estimate from the particle system \eqref{p_CS} to the kinetic equation \eqref{k_CS}.

\begin{theorem}\label{T3.4}
Suppose that initial probability measure $\mu_0\in\mathcal{P}(\mathbb R\times\mathbb R)$ is of compact support for both $x$ and $v$ variables. Moreover, we assume
\begin{align}
\begin{aligned} \label{moments2}
& \iint_{\R \times \R}x  \, \mu_0(dxdv)= \iint_{\R \times \R} v  \, \mu_0(dxdv)=0 \quad \mbox{and} \quad \iint_{\mathbb R\times\mathbb R} \mu_0(dxdv) \leq m_0
\end{aligned}
\end{align} 
for some $m_0 > 0$. Then, the following assertions hold for $1\leq p\leq +\infty$.
\begin{enumerate}
\item[(i)]
There exists a measure-valued solution $\mu_t\in L^{\infty}\left((0,\infty);\mathcal{P}(\mathbb R\times\mathbb R)\right)$ to \eqref{k_CS} with initial data $\mu_0$ such that $\mu_t$ is approximated by empirical measure $\mu_t^N$ associated to \eqref{p_CS} in $p$-Wasserstein distance uniformly in time: \[\varlimsup_{N\rightarrow +\infty}\sup_{t\in[0,+\infty)}W_{p}(\mu_t^N,\mu_t)=0.\]

 \item[(ii)] Moreover, if ${\tilde \mu}_t$ is another measure-valued solution to \eqref{k_CS} with initial measure ${\tilde \mu}_0$ with compact support and zero mean \eqref{moments2}, which is also constructed by empirical measure ${\tilde \mu}_t^N$ in $p$-Wasserstein distance uniformly in time. Then there exists a nonnegative constant $G$ independent of $t$ such that
\[ W_{p}(\mu_t, {\tilde \mu}_t)  \leq G W_{p}(\mu_0, {\tilde \mu}_0), \quad t \in [0, \infty).\]

\item[(iii)] In addition, for every measure-valued solution $\mu_t$ to \eqref{k_CS} constructed from the above mean-field limit process, we can find a positive number $C_0$ independent of $t$ and have the following estimates of the large time behavior:
\[
\lt( \iint_{\R \times \R} |v|^p \,\mu_t(dxdv)\rt)^{\frac{1}{p}} \leq  e^{-\psi(C_0)t} \lt( \iint_{\R \times \R} |v|^p \,\mu_0(dxdv)\rt)^{\frac{1}{p}}.
\] 
\end{enumerate}
\end{theorem}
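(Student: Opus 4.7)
The overall strategy is to build $\mu_t$ as a uniform-in-time $W_p$-limit of empirical measures, exploiting the particle-level stability from Theorem \ref{L3.2}. First, I would construct an approximating sequence $\mu_0^N = \frac{1}{N}\sum_{i=1}^N \delta_{(x_i^0,v_i^0)}$ with $W_p(\mu_0^N,\mu_0)\to 0$; this is standard for probability measures of compact support (e.g.\ via quantization of $\mu_0$). By a small translation we may further arrange $\sum_i x_i^0 = \sum_i v_i^0 = 0$ so that the conservation laws and the mean-zero hypothesis used in Theorem \ref{L3.2} apply. Solving \eqref{p_CS} for each $N$ gives an empirical measure $\mu_t^N = \frac{1}{N}\sum_i \delta_{(x_i(t),v_i(t))}$, which by Remark \ref{R2.1} is a measure-valued solution of \eqref{k_CS}.

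The core step is the uniform-in-time Cauchy property of $\{\mu_t^N\}$. Given approximations $\mu_0^N$ and $\mu_0^{N'}$, I relabel the particles along an optimal $W_p$ coupling (after a standard common refinement of the atom masses for $N\neq N'$) so that
\[
W_p^p(\mu_0^N,\mu_0^{N'}) = \frac{1}{N}\sum_{i=1}^N\bigl(|x_i^0-\bar x_i^0|^p+|v_i^0-\bar v_i^0|^p\bigr),
\]
which is comparable to $\mathcal{X}(0)^p + \mathcal{V}(0)^p$. Since any coupling at time $t$ gives an upper bound on $W_p(\mu_t^N,\mu_t^{N'})$, transport along the particle flow yields $W_p(\mu_t^N,\mu_t^{N'}) \lesssim \mathcal{X}(t) + \mathcal{V}(t)$. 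Applying Theorem \ref{L3.2} bounds the right-hand side uniformly in $t$ by a modulus of continuity of $W_p(\mu_0^N,\mu_0^{N'})$ (the fractional term $\mathcal{X}(0)^{1-\beta}$ still tends to zero as the initial distance vanishes). This gives the uniform-in-time Cauchy property and hence a limit $\mu_t$. Tightness plus compact support propagation (which follows from bounded velocity spread by Theorem \ref{T3.1}-type flocking estimates) yields $\mu_t \in L^\infty((0,\infty);\mathcal{P}(\R\times\R))$ with $\sup_t W_p(\mu_t^N,\mu_t)\to 0$.

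Next, I verify that $\mu_t$ solves \eqref{k_CS} in the distributional sense by passing to the limit in Definition \ref{D4.1}(iii) written for $\mu_t^N$. For a test function $\varphi \in C^1_0(\R\times\R\times[0,T))$, the linear terms $\langle\mu_t^N,\partial_s\varphi + v\partial_x\varphi\rangle$ pass to the limit by $W_p$-convergence and continuity. The delicate term is $\langle \mu_s^N,\mathcal{A}(\mu_s^N)\partial_v\varphi\rangle$ which involves $\psi(x-y)(w-v)$ integrated against the product measure $\mu_s^N\otimes\mu_s^N$. Here I would use that $(w-v)\psi(x-y) = \partial_y\bigl((w-v)\Psi(x-y)\bigr) + $ regular parts, or alternatively, observe that although $\psi$ is singular, the spatial diameter is bounded uniformly in $N$ and $t$ by Theorem \ref{T3.1}, so the effective interaction is bounded from below away from the diagonal once the particles concentrate; more robustly, one integrates by parts to transfer the singularity onto $\Psi$, which is continuous and bounded on the support. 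Together with uniform bounds on velocity spread, this lets me pass to the limit.

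For (ii), repeating the coupling argument between two sequences of empirical measures $\mu_t^N$ and $\tilde\mu_t^N$ and passing to the limit produces a bound of the form $W_p(\mu_t,\tilde\mu_t) \leq G\,\Phi(W_p(\mu_0,\tilde\mu_0))$ uniformly in $t$, where $\Phi$ is the modulus arising from Theorem \ref{L3.2}; on the bounded set of initial data the stated linear bound follows (with $G$ depending on the common support). For (iii), I would first show the particle-level $\ell^p$ estimate $\bigl(\frac{1}{N}\sum_i |v_i(t)|^p\bigr)^{1/p} \leq e^{-\psi(C_0)t}\bigl(\frac{1}{N}\sum_i |v_i^0|^p\bigr)^{1/p}$ by differentiating $\sum_i |v_i|^p$, using the symmetry trick already used in \eqref{D4.3}--\eqref{D4.4} together with the mean-zero property and the uniform spatial diameter bound from Theorem \ref{T3.1}; then pass to the limit using that $W_p$-convergence together with uniform moment bounds yields convergence of $p$-th moments. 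The main obstacle I anticipate is the careful handling of the fractional exponent $(1-\beta)$ in the particle stability when translating it into the uniform-in-time Cauchy property, and the justification of the limit in the nonlinear singular term $\mathcal{A}(\mu_s^N)$; both are controlled by the uniform-in-$N$ spatial diameter bound inherited from the first-order reformulation.
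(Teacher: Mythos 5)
Your overall strategy coincides with the paper's proof sketch (Steps A--E): construct zero-mean empirical approximations $\mu_0^N$ of $\mu_0$, exploit particle-level stability to obtain a uniform-in-time $W_p$-Cauchy sequence $\{\mu_t^N\}$, pass to the limit and verify the weak formulation, and derive (ii) and (iii) from the same limiting procedure. Your suggestion to transfer the singularity from $\psi$ onto the continuous primitive $\Psi$ via integration by parts when passing to the limit in $\mathcal{A}(\mu_s^N)$ is a reasonable elaboration of a step the paper leaves to a citation of \cite{HKZ18}; the uniform spatial diameter bound you invoke from Theorem \ref{T3.1} is indeed the crucial ingredient that makes this work. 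For part (i), you are also right to emphasize that the fractional term $\mathcal{X}(0)^{1-\beta}$ in Theorem \ref{L3.2} still vanishes as the initial $W_p$-distance goes to zero, so the Cauchy property survives even though the modulus is not linear; the paper's estimate \eqref{F-3} writes this as a linear bound, which overstates the rate but not the conclusion.

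The genuine gap is in your argument for part (ii). You claim that ``on the bounded set of initial data the stated linear bound follows,'' but this does not hold. Theorem \ref{L3.2} gives a modulus of the form $\Phi(s)=C\bigl(s+s^{1-\beta}\bigr)$, and $\Phi(s)/s=C\bigl(1+s^{-\beta}\bigr)\to\infty$ as $s\to 0^+$; therefore there is no constant $G$ with $\Phi(s)\le G s$ on any interval $[0,M]$, no matter how small $M$ is. Restricting the initial data to a bounded set controls the \emph{large-}$s$ behavior of $\Phi$, not the small-$s$ behavior, and it is precisely the small-$s$ regime that governs the claimed Lipschitz-type stability. From the particle estimate one can only conclude $W_p(\mu_t,\tilde\mu_t)\le G\bigl(W_p(\mu_0,\tilde\mu_0)+W_p(\mu_0,\tilde\mu_0)^{1-\beta}\bigr)$, with the fractional modulus persisting. (The paper's Step D in fact inherits the same issue, since it also relies on \eqref{F-3} as if the particle map were Lipschitz; so the gap is not peculiar to your write-up, but your explicit justification of the linear bound is the step that would fail.) A correct statement would either replace the linear bound in (ii) by this H\"older-type bound, or identify an additional mechanism specific to the second-order system that linearizes the modulus.
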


\begin{remark} Theorem \ref{T3.4} says that the construction of $\mu_t$ is independent of choice of approximation sequence. This implies the solution $\mu_t$ is unique in the sense of mean field limits.
\end{remark}

\begin{proof}[Proof of Theorem \ref{T3.4}]
Since the overall proof of Theorem \ref{T4.1} is almost the same as that of \cite[Corollary 1.1]{HKZ18}, we will provide only sketch of the proof.\\

\noindent $\bullet$~Step A (Extraction of Cauchy approximation for $\mu_0$ in $W_p$): We take a sequence of empirical measures $\tilde{\mu}_0^N=\sum_{i=1}^{N} m_i \delta_{(\tilde x_i , \tilde v_i )}(x,v)$ that approximate $\mu_0$ satisfying
\begin{equation*}
\lim\limits_{N\rightarrow+\infty}W_p(\tilde{\mu}_0^N,\mu_0)=0.
\end{equation*}
The existence of such approximation is guaranteed by \cite{V08}. Moreover, we can make the approximation measure have zero mean. Actually, we denote the mean of position and natural velocity as 
\[
\tilde x_c^N := \sum_{i=1}^{N} m_i \tilde x_i \quad \mbox{and} \quad \tilde v_c^N := \sum_{i=1}^{N} m_i \tilde  v_i,
\]
respectively. Then we consider $\mu^N_0$ as 
\[
\mu^N_0 := \sum_{i=1}^{N} m_i \delta_{(\tilde x_i -\tilde x_c^N, \tilde v_i - \tilde v_c^N)}(x,v) \quad \mbox{with} \quad m_i = \iint_{\mathcal{S}_i} \mu_0 (dxdv).
\]
As $\mu_0$ is a probability measure, we get
\[
\iint_{\R \times \R} \mu_0\,(dxd v) = 1.
\] 
This gives that the total mass of $\mu_0^N$ is $\sum_{i=1}^{N} m_i =1$. Moreover, the expectation of both $x$ and $ v$ with respect to $\mu_0^N$ will be zero, i.e., 
\[
\iint_{\R \times \R} x \mu^N_0 \,(dxd v) = \sum_{i=1}^{N^2} m_i(\tilde x_i - \tilde x_c^N) =0 \quad \mbox{and} \quad \iint_{\R \times \R}  v \mu^N_0 \,(dxd v) = \sum_{i=1}^{N^2} m_i(\tilde  v_i - \tilde  v_c^N) =0.
\]
On the other hand, we find the estimates of $\tilde x_c^N$ as follows:
\begin{align*}
\lim_{N\rightarrow+\infty}\lt| \tilde x_c^N \rt|&= \lim_{N\rightarrow+\infty}\lt|\tilde x_c^N - \iint_{\R \times \R} x \mu_0\,(dxd v)\rt|\cr
&= \lim_{N\rightarrow+\infty}\lt|\sum_{i=1}^{N} m_i \tilde x_i - \iint_{\R \times \R} x \mu_0\,(dxd v)\rt|\\
&= \lim_{N\rightarrow+\infty}\lt|\iint_{\R \times \R} x \tilde{\mu}_0^N\,(dxd v) - \iint_{\R \times \R} x \mu_0\,(dxd v)\rt|\\
&\leq \lim_{N\rightarrow+\infty}W_1(\tilde{\mu}^N_0, \mu_0)\cr
&\leq \lim_{N\rightarrow+\infty}W_p(\tilde{\mu}^N_0, \mu_0)\cr
&=0.
\end{align*}
Similarly, we also have
\[
\lim_{N\rightarrow+\infty}\lt|\tilde  v_c^N\rt| =0.
\]
This yields
\begin{align*}
\lim_{N\rightarrow+\infty}W_p(\mu_0, \mu^N_0)&\leq \lim_{N\rightarrow+\infty}W_p(\mu_0, \tilde{\mu}^N_0)+\lim_{N\rightarrow+\infty}W_p(\tilde{\mu}^N_0, \mu^N_0)\\
&\leq 0+\lim_{N\rightarrow+\infty}\lt( \lt|\tilde x_c^N\rt|^p+\lt|\tilde  v_c^N\rt|^p \rt)^{\frac{1}{p}}=0.
\end{align*}
Hence, we successfully construct an approximation sequence $\mu_0^N$ with zero mean property. Then, for any $\varepsilon>0$, there exists a positive integer $N= N(\varepsilon)$ such that
\[
W_p(\mu^n_0,\mu^m_0)<\varepsilon \quad \mbox{for} \quad n,m> N(\varepsilon).
\]

\noindent $\bullet$~Step B (Approximation of $W_p(\mu_0^n,\mu_0^m)$): Using the argument used in the proof of \cite[Corollary 1.1]{HKZ18}, we can find a natural number $M_{mn}$ such that
\begin{equation}\label{F-2}
\left|W_p^p(\mu_0^n,\mu_0^m)-\frac{1}{M_{mn}}\sum_{k=1}^{M_{mn}}|z_{k0}-\bar{z}_{k0}|^p\right|\le \varepsilon^p,
\end{equation}
where, $z_{k0}:=(x_{k0}, v_{k0})$ and $\bar{z}_{k0}:=(\bar{x}_{k0},\bar{ v}_{k0})$ are support of initial approximated empirical measures $\mu_0^n$ and $\mu_0^m$ respectively.\\

\noindent $\bullet$~Step C (Lifting the information at $s=0$ to $s=t>0$): Now, using \eqref{F-2} and the previous $\ell_p$-stability in particle level in Theorem \ref{L4.1}, we can directly estimate $W_p(\mu_t^n,\mu_t^m)$ as 

\begin{equation}\label{F-3}
W^p_p(\mu_t^n,\mu_t^m)\le 2^{p-1}G^p(W_p^p(\mu_0^n,\mu_0^m)+\varepsilon^p)\leq 2^pG^p\varepsilon^p,
\end{equation}
which implies that the sequence $\mu_t^n$ is Cauchy in $W_p$-metric. Thus, we can find a limit measure $\mu_t$. We next apply similar arguments in \cite{HKZ18} and show that the limit measure $\mu_t$ is the measure-valued solution of the kinetic equation \eqref{KCS1} with initial data $\mu_0$. Moreover, because of the estimate \eqref{F-3}, we can conclude that for any $\varepsilon$, there exists a positive constant $L$, such that
\[\sup_{t\in[0,+\infty)}W_p(\mu^n_t,\mu_t)\leq 4G\varepsilon\quad \mbox{for}\quad n>L.\]
This yields
\begin{equation}\label{F-3-1}
\varlimsup_{N\rightarrow +\infty}\sup_{t\in[0,+\infty)}W_p(\mu_t^N,\mu_t)=0.
\end{equation}
The uniform compact support of $\mu_t$ follows this uniform convergence. As the coefficient $G$ is independent of $p$, we can directly conclude \eqref{F-3-1} holds for $W_\infty$.\\

\noindent $\bullet$~Step D (Uniform stability of kinetic equation): For measures $\mu_0$ and $ \tilde \mu_0$ in ${\mathcal P}(\mathbb R\times\mathbb R)$, let $\mu$ and ${\tilde \mu}$ be measure-valued solutions to \eqref{KCS1}. Then, it follows from \eqref{F-3-1} that for any $\varepsilon \ll 1$, there exists $N_0(\varepsilon) \in \mathbb N$ such that
\[  W_p(\mu, \mu^n) < \frac{\varepsilon}{2}, \qquad  W_p({\tilde \mu}^n, {\tilde \mu}) <  \frac{\varepsilon}{2}, \quad \text{and} \quad n \geq N_0(\varepsilon).  \]
Then, we use the above estimates and \eqref{F-3} to obtain
\begin{align*}
W^p_p(\mu_t, {\tilde \mu}_t) &\leq \Big( W_p(\mu_t, \mu_t^n) + W_p(\mu_t^n, {\tilde \mu}_t^n) + W_p({\tilde \mu}_t^n, {\tilde \mu}_t) \Big)^p \\
&\leq \Big( \varepsilon + W_p(\mu_t^n, {\tilde \mu}_t^n) \Big)^p \\
&\leq 2^{p-1} \Big(\varepsilon^p + W^p_p(\mu_t^n, {\tilde \mu}_t^n) \Big) \\
&\leq 2^{p-1} \Big( 2\varepsilon^p + G^p W_p^p(\mu^n_0, {\mu}^n_0) \Big).
\end{align*}
Letting $n \to \infty$, we have
\[ W^p_p(\mu_t, {\tilde \mu}_t) \leq 2^{p} \varepsilon^p + 2^{p-1} G^p W_p^p(\mu_0,{\tilde \mu}_0).   \]
Since $\varepsilon$ was arbitrary, we have the uniform $W_2$-stability:
\[ W_p(\mu_t, {\tilde\mu}_t) \leq 2^{\frac{p-1}{p}} G W_p(\mu_0, {\tilde \mu}_0), \quad t \geq 0. \]
Also, as the coefficient is uniform with respect to $p$, above stability holds in $W_\infty$.\newline

\noindent $\bullet$~Step E (Large time behavior): For the particle system, the expectation of $|v|^p$ with respect to the measure $\mu_t^N$ is nothing but the $\ell_p$ norm of $v_i$. According to the analysis in \cite{HKZ18,ZZ20}, the $\ell_p$ norm of $v_i$ will decrease to zero exponentially fast with a rate independent of $N$. More precisely, we have
  \begin{equation}\label{D5}
\lt( \iint_{\R \times \R} |v|^p \,\mu_t^N(dxdv)\rt)^{\frac{1}{p}} \leq  e^{-\psi(C_0)t} \lt( \iint_{\R \times \R} |v|^p \,\mu_0^N(dxdv)\rt)^{\frac{1}{p}},
\end{equation}
where $C_0$ is a positive constant independent of $N$. On the other hand, from Step C, we know $\mu_t^N$ will converge to $\mu_t$ in $p$-Wasserstein distance, which immediately implies the weak convergence of $\mu_t^N$ to $\mu_t$ \cite{V08}. Therefore, we let $N$ tends to infinity on both sides of \eqref{D5} and obtain 
\[
\lt( \iint_{\R \times \R} |v|^p \,\mu_t(dxdv)\rt)^{\frac{1}{p}} \leq  e^{-\psi(C_0)t} \lt( \iint_{\R \times \R} |v|^p \,\mu_0(dxdv)\rt)^{\frac{1}{p}}.
\] 
\end{proof}

Next, we provide some remarks on the first-order equation \eqref{D1}. Recall the the continuum equation corresponding to the first-order particle system \eqref{D1}:
\begin{align}\label{KCS1}
\left\{\begin{aligned}
&\partial_tf +\partial_{x} (v[f] f)=0,\quad (x,\omega)\in\mathbb R\times\mathbb R,~~t > 0, \\
& v[f] (x, \omega, t)=\omega+\iint_{\R \times \R} \Psi(x_*-x)f(x_*,\omega_*,t ) \, d\omega_* dx_*.
\end{aligned}
\right.
\end{align}
Note that the probability density function $h = h(\omega)$ for natural velocities appears as a $\omega$-marginal density function of $f$:
\[ 
\int_\R f(x,\omega,t)\,dx = h(\omega).
\] 
In order to establish the global-in-time existence of measure-valued solutions to the kinetic equation \eqref{k_CS}, the uniform-in-time stability estimate of solutions to the particle system \eqref{p_CS} played an essential role. Since we already derived the stability estimate for the first-order particle system, by using almost the same argument as in the previous part, we obtain the uniform-in-time existence of measure-valued solutions to the equation \eqref{main_eq} and the uniform-in-time mean-field limit from \eqref{D1} to \eqref{KCS1}.

\begin{theorem}\label{T4.1}
Suppose that initial probability measure $\sigma_0\in\mathcal{P}(\mathbb R\times\mathbb R)$ is of compact support for both $x$ and $\omega$ variables. Moreover, we assume
\begin{align}
\begin{aligned} \label{moments2}
& \iint_{\R \times \R}x  \, \sigma_0(dxd\omega)= \iint_{\R \times \R} \omega  \, \sigma_0(dxd\omega)=0 \quad \mbox{and} \quad  \iint_{\mathbb R\times\mathbb R} \sigma_0(dxd\omega) \leq m_0.
\end{aligned}
\end{align} 
Then, the following assertions hold for $1\leq p\leq +\infty$.
\begin{enumerate}
\item[(i)]
There exists a measure-valued solution $\sigma_t\in L^{\infty}\left((0,\infty);\mathcal{P}(\mathbb R\times\mathbb R)\right)$ to \eqref{KCS1} with initial data $\sigma_0$ such that $\sigma_t$ is approximated by empirical measure $\sigma_t^N$ in $p$-Wasserstein distance uniformly in time: \[\varlimsup_{N\rightarrow +\infty}\sup_{t\in[0,+\infty)}W_{p}(\sigma_t^N,\sigma_t)=0.\]

 \item[(ii)] Moreover, if ${\tilde \sigma}_t$ is another measure-valued solution to \eqref{KCS1} with initial measure ${\tilde \sigma}_0$ with compact support and zero mean \eqref{moments2}, which is also constructed by empirical measure ${\tilde \sigma}_t^N$ in $p$-Wasserstein distance uniformly in time. Then there exists nonnegative constant $G$ independent of $t$ such that
\[ W_{p}(\sigma_t, {\tilde \sigma}_t)  \leq G W_{p}(\sigma_0, {\tilde \sigma}_0), \quad t \in [0, \infty).\]
\end{enumerate}
\end{theorem}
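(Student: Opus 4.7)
The proof mirrors that of Theorem \ref{T3.4} but is considerably cleaner, since the first-order reformulation \eqref{D1} allows direct use of the particle-level stability of Theorem \ref{L4.1} without the extra bookkeeping needed to pass between velocities and natural velocities. \textbf{Step A (empirical approximation with zero mean).} I first pick a standard $N$-particle approximation of $\sigma_0$, partitioning $\mathrm{supp}\,\sigma_0$ into cells and placing mass $1/N$ at representative points, and then subtract the resulting sample means in $x$ and $\omega$. Since the pre-shift approximation $\tilde\sigma^N_0$ converges to $\sigma_0$ in $W_p$, the sample means tend to zero, and the shifted $\sigma^N_0=\frac{1}{N}\sum_i\delta_{(x_i^{0,N},\omega_i^N)}$ still satisfies $W_p(\sigma^N_0,\sigma_0)\to 0$ while enjoying the zero-mean property in both variables. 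Compactness of $\mathrm{supp}\,\sigma_0$ also gives a uniform bound $D_0$ on all position and natural-velocity diameters, so that $\psi(2D_0)>0$ serves as a uniform dissipation rate.

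\textbf{Step B (doubled particle system and uniform-in-time Cauchy estimate).} For two sizes $n\neq m$ and any $\varepsilon>0$, I use the doubling trick of \cite{HKZ18} (exactly as in Steps B--C of the proof of Theorem \ref{T3.4}): there exist an integer $M=M_{mn}$ and two collections $\{z_{k,0}\}_{k=1}^{M}$, $\{\bar z_{k,0}\}_{k=1}^{M}$ of sub-particles of mass $1/M$ that reproduce $\sigma^n_0$ and $\sigma^m_0$ respectively, such that
\[
\Bigl|W_p^p(\sigma^n_0,\sigma^m_0)-\frac{1}{M}\sum_{k=1}^{M}|z_{k,0}-\bar z_{k,0}|^p\Bigr|\leq\varepsilon^p.
\]
Since \eqref{D1} is invariant under splitting a particle into coincident sub-particles of equal total mass, evolving these sub-particles under \eqref{D1} reproduces $\sigma^n_t$ and $\sigma^m_t$. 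Applying Theorem \ref{L4.1} to the common $M$-particle system, with natural-velocity diameter bounded by $2D_0$ and using that $\mathcal U$ is constant in time, I obtain
\[
W_p^p(\sigma^n_t,\sigma^m_t)\leq C_p\Bigl(W_p^p(\sigma^n_0,\sigma^m_0)+\varepsilon^p\Bigr),\qquad t\geq 0,
\]
with $C_p$ depending only on $p$, $\beta$ and $\psi(2D_0)$. By completeness of $(\mathcal P_p(\R^2),W_p)$ this gives a limit $\sigma_t$ with $\sup_{t\geq 0}W_p(\sigma^N_t,\sigma_t)\to 0$.

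\textbf{Step C (weak-formulation limit and stability).} To verify that $\sigma_t$ solves \eqref{KCS1} in the sense of Definition \ref{D4.1}, I pass to the limit in the weak form satisfied by each $\sigma^N_t$. The only nontrivial term involves $\Psi\star\rho^N$; but $\Psi$ is continuous on $\R$ with growth $|x|^{1-\beta}$, and the uniform compact support of $\{\sigma^N_t\}_{N,t}$ together with $W_p$-convergence yields uniform convergence of $\Psi\star\rho^N$ on compact sets, giving (i). For (ii), the triangle inequality together with Step B applied to interleaved approximations of $\sigma_0$ and $\tilde\sigma_0$, followed by $N\to\infty$, gives
\[
W_p(\sigma_t,\tilde\sigma_t)\leq G\,W_p(\sigma_0,\tilde\sigma_0),\qquad t\geq 0,
\]
with $G=G(D_0,\beta,p)$. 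The $p=\infty$ case is obtained by passing $p\to\infty$, since the constant $G$ is uniform in $p$.

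\textbf{Main obstacle.} The essential subtlety is the doubling construction in Step B, namely reconciling empirical measures of different sizes on a common sub-particle system in a way compatible with the nonlinear flow of \eqref{D1}; the rest of the argument then reduces to an immediate application of Theorem \ref{L4.1}, whose crucial feature is that the compact-support hypothesis on $\sigma_0$ produces a uniform, strictly positive dissipation rate $\psi(2D_0)$.
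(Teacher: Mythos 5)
Your proposal is correct and follows essentially the same route as the paper, which explicitly declines to prove Theorem \ref{T4.1} in detail and instead notes it follows by "almost the same argument" as Theorem \ref{T3.4}: zero-mean empirical approximation, the HKZ-style doubling/common-refinement of atomic measures, lifting the $t=0$ estimate to all $t\geq 0$ via the particle-level $\ell_p$-stability of Theorem \ref{L4.1}, and a triangle-inequality argument for stability; your observation that the first-order model is invariant under splitting into coincident equal-mass sub-particles is precisely the (unstated) fact needed to make the doubling step compatible with the flow, and your treatment of $p=\infty$ and the uniform-in-$t$ constants matches the paper's.
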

Note that the stability result in Theorem \ref{T4.1} immediately implies the existence of limit measure $\sigma_{\infty}$ when $t$ tends to infinity. More precisely, we have the following corollary. 
\begin{corollary}\label{C4.1}
Suppose that initial probability measure $\sigma_0\in\mathcal{P}(\mathbb R\times\mathbb R)$ is of compact support for both $x$ and $\omega$ variable. Let us denote by $\Sigma_x(\sigma_t)$ and $\Sigma_\omega(\sigma_t)$ the $x$- and $\omega$-projections of support of $\sigma_t$. Moreover, we assume
\begin{align}
\begin{aligned} \label{moments2}
& \iint_{\R \times \R} x  \, \sigma_0(dxd\omega)= \iint_{\R \times\R}\omega  \, \sigma_0(dxd\omega)=0.
\end{aligned}
\end{align} 
Then, the solution $\sigma_t$ is unique in the sense of mean field limits. Moreover, for the initial measure $\sigma_0$, there exists a limit measure $\sigma_\infty$, which is unique in the sense of mean field limits, such that 
\[
W_p(\sigma_t,\sigma_\infty)\leq C_1 e^{-\psi(C_0)t}
\]
for $t \geq 0$, where the positive constants $C_0$ and $C_1$ are given by
\[
C_0 := \max\{diam(\Sigma_x(\sigma_0)), \Psi^{-1}(diam(\Sigma_\omega(\sigma_0))) \}
\] 
and
\[
C_1 :=2\max_{(x,\omega) \in supp(\sigma_0)} \lt|\omega+ \iint_{\R \times \R} \Psi(x_* - x)\sigma_0(dx_* d\omega_*)\rt|,
\]
respectively.
\end{corollary}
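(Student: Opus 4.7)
The plan is to produce $\sigma_\infty$ as the $W_p$-limit of the empirical equilibria of the first-order particle system \eqref{D1}, and then push the exponential flocking estimate of Theorem \ref{T3.1} through the uniform-in-time mean-field limit already established in Theorem \ref{T4.1}.

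First I would take, as in Step A of the proof of Theorem \ref{T3.4}, a sequence of zero-mean empirical measures
\[
\sigma_0^N = \frac{1}{N}\sum_{i=1}^N \delta_{(x_i^0,\omega_i)}
\]
with $W_p(\sigma_0^N,\sigma_0)\to 0$. Since $\mathrm{supp}(\sigma_0^N)\subset\mathrm{supp}(\sigma_0)$, the diameter quantities $D_x(0)$ and $D_\omega$ entering Theorem \ref{T3.1} are dominated uniformly in $N$ by $\mathrm{diam}(\Sigma_x(\sigma_0))$ and $\mathrm{diam}(\Sigma_\omega(\sigma_0))$, so the constant $C_0$ from the statement works simultaneously for every $N$. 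Applying Theorem \ref{T3.1} to each $N$ yields trajectories $X^N(t)$ converging to equilibria $X^{N,\infty}=(x_1^\infty,\ldots,x_N^\infty)$ at rate $e^{-\psi(C_0)t}$.

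Next, I define the empirical equilibrium $\sigma_\infty^N := \frac{1}{N}\sum_i \delta_{(x_i^\infty,\omega_i)}$. The index-to-index transport plan between $\sigma_t^N$ and $\sigma_\infty^N$ gives
\[
W_p^p(\sigma_t^N,\sigma_\infty^N) \leq \frac{1}{N}\sum_i |x_i(t)-x_i^\infty|^p.
\]
Each $|x_i(t)-x_i^\infty|$ is controlled by integrating the particle ``velocity'' $v_i(t) := \omega_i + \frac{1}{N}\sum_j \Psi(x_j(t)-x_i(t))$. Differentiating shows that $v_i$ obeys a Cucker--Smale ODE $\dot v_i = \frac{1}{N}\sum_j \psi(x_j-x_i)(v_j-v_i)$, so the standard flocking argument applied with the zero-mean property $\sum_i v_i = 0$ gives $\max_i|v_i(t)|\leq \max_i|v_i(0)|\,e^{-\psi(C_0)t}$. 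Integrating $x_i(t)-x_i^\infty = -\int_t^\infty v_i(s)\,ds$ and taking $\ell^p$-norms produces
\[
W_p(\sigma_t^N,\sigma_\infty^N) \leq C_1^N\, e^{-\psi(C_0)t},
\]
where $C_1^N$ is built from the discrete quantity $\max_i|\omega_i + N^{-1}\sum_j \Psi(x_j^0-x_i^0)|$, which converges as $N\to\infty$ to the continuum supremum $\max_{(x,\omega)\in\mathrm{supp}(\sigma_0)}|\omega+(\Psi\star\sigma_0)(x)|$ entering the definition of $C_1$.

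To identify $\sigma_\infty$, I would show that $\{\sigma_\infty^N\}$ is $W_p$-Cauchy via the triangle inequality
\[
W_p(\sigma_\infty^N,\sigma_\infty^M) \leq W_p(\sigma_\infty^N,\sigma_t^N) + W_p(\sigma_t^N,\sigma_t^M) + W_p(\sigma_t^M,\sigma_\infty^M),
\]
sending $t\to\infty$ to kill the outer two terms uniformly in $N,M$ by the previous step, and using the $t$-independent stability of Theorem \ref{T4.1}(ii) on the middle term. Passing $N\to\infty$ then defines $\sigma_\infty$. Weak lower semicontinuity of $W_p$ combined with the uniform-in-time convergence $\sigma_t^N\to\sigma_t$ from Theorem \ref{T4.1}(i) and the above estimate gives $W_p(\sigma_t,\sigma_\infty)\leq C_1 e^{-\psi(C_0)t}$. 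Uniqueness of both $\sigma_t$ and $\sigma_\infty$ in the mean-field sense is inherited from Theorem \ref{T4.1}(ii) applied at each $t$ and passed to the limit.

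The main technical obstacle is the sharp identification of the prefactor $C_1$: the rate $\psi(C_0)$ is inherited directly from Theorem \ref{T3.1}, but one must carefully track how the exponential decay of the CS-type velocities combines with the integration in time, and how the discrete maximum $\max_i|\omega_i + N^{-1}\sum_j \Psi(x_j^0-x_i^0)|$ passes, as $N\to\infty$, to the continuum supremum $\max_{(x,\omega)\in\mathrm{supp}(\sigma_0)}|\omega+(\Psi\star\sigma_0)(x)|$ so that the stated factor $2$ in $C_1$ indeed suffices.
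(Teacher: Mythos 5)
The paper states Corollary~\ref{C4.1} without any proof beyond the one-line remark that it ``immediately follows'' from Theorem~\ref{T4.1}, so your task was genuinely to reconstruct the argument, and the route you take is the natural one consistent with the paper's framework (empirical approximation, particle-level equilibrium from Theorem~\ref{T3.1}, and uniform-in-time stability passed to the limit). The key mechanism you invoke --- that $v_i(t):=\dot x_i(t)=\omega_i+\frac1N\sum_j\Psi(x_j-x_i)$ itself satisfies a regular Cucker--Smale ODE with $\sum_i v_i$ conserved and equal to $0$, hence $\max_i|v_i(t)|\le \max_i|v_i(0)|\,e^{-\psi(C_0)t}$ --- is exactly the recovery of the second-order system from the first-order reformulation that the paper uses throughout, and the Cauchy argument for $\{\sigma_\infty^N\}$ via the uniform-in-$t$ stability of Theorem~\ref{T4.1}(ii) is sound.

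There is, however, one concrete quantitative gap that you flag but do not resolve, and it deserves attention because it concerns the stated form of $C_1$. Integrating the velocity bound gives
\[
|x_i(t)-x_i^\infty|\le\int_t^\infty|v_i(s)|\,ds\le\frac{\max_i|v_i(0)|}{\psi(C_0)}\,e^{-\psi(C_0)t},
\]
and (after Minkowski in $\ell^p$) the same $1/\psi(C_0)$ prefactor survives in $W_p(\sigma_t^N,\sigma_\infty^N)$. In the continuum limit this produces a constant of the form $\psi(C_0)^{-1}\max_{(x,\omega)\in\mathrm{supp}(\sigma_0)}\bigl|\omega+(\Psi\star\rho_0)(x)\bigr|$, not the stated $C_1=2\max|\omega+\Psi\star\rho_0|$. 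The factor $2$ in the paper's $C_1$ (which would come from bounding the velocity spread $D_v(0)\le 2\max_i|v_i(0)|$) and the $1/\psi(C_0)$ from the time integration do not coincide; the paper's constant appears to omit the $1/\psi(C_0)$ factor. Your proof strategy is correct, but to honestly match the statement you would either need to show that $\psi(C_0)\ge 1/2$ (which does not hold in general) or accept that the stated $C_1$ should read $\psi(C_0)^{-1}\max_{(x,\omega)\in\mathrm{supp}(\sigma_0)}|\omega+(\Psi\star\rho_0)(x)|$. A second, minor, point: after the re-centering in Step~A, the supports of the shifted $\sigma_0^N$ need not literally sit inside $\mathrm{supp}(\sigma_0)$, only within an $o(1)$ neighborhood of it; so the diameters $D_x(0),D_\omega$ are bounded by $\mathrm{diam}(\Sigma_x(\sigma_0))+\varepsilon_N$, $\mathrm{diam}(\Sigma_\omega(\sigma_0))+\varepsilon_N$ with $\varepsilon_N\to 0$, and one lets $\varepsilon_N\to 0$ at the end. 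This does not affect the final rate, but it should be stated rather than asserted as an inclusion.
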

\begin{remark}
We construct a measure-valued solution $\mu_t$ to \eqref{k_CS} in Theorem \ref{T3.4}, and on the other hand, we can also construct a measure-valued solution $\Gamma(\cdot, \cdot,t) \# \sigma_t$ to \eqref{k_CS} as discussion in the introduction, where $\sigma_t$ is the measure-valued solution to \eqref{main_eq}. So far, given $\mu_0=\Gamma(\cdot, \cdot,0) \# \sigma_0$, we cannot say $\mu_t=\Gamma(\cdot, \cdot,t) \# \sigma_t$ since it is not clear that $\Gamma(\cdot, \cdot,t) \# \sigma_t$ can be approximated by the empirical measure associated to the particle system \eqref{p_CS}. We only have
\[
\varlimsup_{N\rightarrow +\infty}\sup_{t\in[0,+\infty)} W_p(\Gamma^N_t \# \sigma^N_t, \mu_t) =0,
\]
where $\sigma^N_t$ is given as in \eqref{eq_mu}. In next section, we will see these two solutions coincide if we add some regularity conditions. 
\end{remark}

%
%
%
\section{Contractivity in a modified $p$-Wasserstein distance}\label{sec:contr}
\setcounter{equation}{0}
\vspace{0.3cm}

In this section, we provide the contractivity of solutions to the first-order reformulation \eqref{main_eq} by employing the method of optimal mass transport. 

\subsection{Reformulation of \eqref{main_eq}: Pseudo-inverse}

Define the cumulative distribution $\tilde f_t$ of the probability measure $f_t \in \mathcal{P}(\R \times \R)$ satisfying \eqref{main_eq} in the sense of Definition \ref{D4.1} by:
\[
\tilde f_t(x,\omega) := \int_{-\infty}^x f_t(dy,\omega).
\]
Let $h$ be the second marginal of $f_t$, i.e.,  $h_t= \pi^2 \# f_t$, then it follows from \eqref{main_eq} that 
\[
\pa_t h_t =  \pa_t\intr f_t(dx,\omega) = 0,
\]
thus we get
\[
(\pi^2 \# f_t)(d\omega) = (\pi^2 \# f_0)(d\omega) =: h(\omega)
\]
for $t\geq 0$. We also define the pseudo-inverse $\chi_f$ of $\tilde f$ as a function of $x$:
\[
\chi_f(\eta,\omega,t) := \inf \{x : \tilde f_t(x,\omega) > \eta\}, \quad \eta \in [0,h(\omega)].
\]
Note that
\[
\tilde f_t(\chi_f(\eta,\omega,t),\omega) = \eta.
\]
Let us try to find an equation for $\chi_f$. By definition, we first easily get
\[
\intrr \varphi(x,\omega,t)\,f_t(dxd\omega) = \int_\R\int_0^{h(\omega)} \varphi(\chi_f(\eta,\omega,t),\omega,t)\,d\eta d\omega.
\]
Using this together with the weak formulation in Definition \ref{D4.1} (iii), we have
\begin{align}\label{eq_chi}
\begin{aligned}
&\int_\R\int_0^{h(\omega)} \varphi(\chi_f(\eta,\omega,t),\omega,t)\,d\eta d\omega - \int_\R\int_0^{h(\omega)} \varphi(\chi_f(\eta,\omega,0),\omega,0)\,d\eta d\omega\cr
&\quad = \intrr \varphi(x,\omega,t)\,f_t(dxd\omega) - \intrr \varphi(x,\omega,0)\,f_0(dxd\omega)\cr
&\quad = \int_0^t \intrr \lt(\pa_s \varphi + (\omega - \Psi \star\rho_s)\pa_x \varphi \rt) f_s(dxd\omega)\,ds\cr
&\quad = \int_0^t \int_\R \int_0^{h(\omega)} ((\pa_s \varphi)(\chi_f(\eta,\omega,s),\omega,s) \cr
&\hspace{3cm} + (\omega - (\Psi \star \rho_s)(\chi_f(\eta,\omega,s)))(\pa_x \varphi)(\chi_f(\eta,\omega,s),\omega,s) )\,d\eta d\omega ds.
\end{aligned}
\end{align}
On the other hand, 
\[
\pa_t  \varphi(\chi_f(\eta,\omega,t),\omega,t) = (\pa_t \varphi)(\chi_f(\eta,\omega,t),\omega,t) + (\pa_x \varphi)(\chi_f(\eta,\omega,t),\omega,t)\pa_t \chi_f(\eta,\omega,t),
\]
and this implies
$$\begin{aligned}
&\int_0^t \int_\R \int_0^{h(\omega)} (\pa_s \varphi)(\chi_f(\eta,\omega,s),\omega,s)\,d\eta d\omega ds\cr
&\quad = \int_\R\int_0^{h(\omega)} \varphi(\chi_f(\eta,\omega,t),\omega,t)\,d\eta d\omega - \int_\R\int_0^{h(\omega)} \varphi(\chi_f(\eta,\omega,0),\omega,0)\,d\eta d\omega \cr
&\qquad - \int_0^t \int_\R \int_0^{h(\omega)}(\pa_x \varphi)(\chi_f(\eta,w,s),w,s)\pa_s \chi_f(\eta,w,s)\,d\eta dw ds.
\end{aligned}$$
This and \eqref{eq_chi} yield
\[
\pa_t \chi_f(\eta,\omega,t) = w - (\Psi \star \rho_t)(\chi_f(\eta,\omega,t)).
\]
Since
$$\begin{aligned}
(\Psi \star \rho_t)(\chi_f(x,\omega,t)) &= \int_\R \Psi(\chi_f(\eta,\omega,t)-y) \rho_t(dy) \cr
&= \intrr \Psi(\chi_f(\eta,\omega,t)-y)f_t(dyd\omega_*)\cr
&= \intr  \int_0^{h(\omega_*)} \Psi(\chi_f(\eta,\omega,t)-\chi_f(\eta_*,\omega_*,t))\,d\eta_* d\omega_*,
\end{aligned}$$
we have the following integro-differential equation:
\bq\label{eq_chi2}
\pa_t \chi_f(\eta,\omega,t) = \omega + \intr  \int_0^{h(\omega_*)}\Psi(\chi_f(\eta_*,\omega_*,t) - \chi_f(\eta,\omega,t))\,d\eta_* d\omega_*.
\eq

We next introduce a notion of absolutely continuous solutions to the equation \eqref{main_eq} below. Note that it is straightforward to find that $\chi_f$ satisfies the integro-differential equation \eqref{eq_chi2} with the absolutely continuous solution $f_t$ of \eqref{main_eq}, see for instance \cite{BCDP15}. 

\begin{definition}\label{def_aconti} A curve $f_t \in AC([0,+\infty); \mathcal{P}_p(\R \times \R))$ is called absolutely continuous solution if 
\begin{itemize}
\item[(i)]
\[
\pa_t f + \pa_x ((\omega - \Psi \star \rho)f) = 0 \quad \mbox{in} \quad \mathcal{D}'(\R \times \R \times [0,+\infty))
\]
and
\item[(ii)] 
\[
\int_0^T \|\omega - (\Psi \star \rho_t)(x)\|_{L^2(f_t)}\,dt < \infty
\]
for every $T>0$.
\end{itemize}
\end{definition}

Then we can combine Theorem \ref{T4.1} and Lemma \ref{lem_energy} (iii) to have the following existence and uniqueness of absolutely continuous solutions of \eqref{main_eq} in the sense of Definition \eqref{def_aconti}, and subsequently this implies that \eqref{eq_chi2} is well-defined  \cite{BCDP15, CCT19, CFFLS11}. Notice that in Theorem \ref{T4.1} we assumed that the initial data $\mu_0$ has compact support in $x$ and $\omega$ variables, and the uniform size of support can be obtained. Thus any $p$-th moments of the measure-valued solution is bounded uniformly in time. Under these assumptions, we can also use the observation in Remark \ref{rmk_free} for the condition (ii) in Definition \ref{def_aconti}, instead of Lemma \ref{lem_energy} (iii).

\subsection{Contractivity estimate}

Note that the $p$-Wasserstein distance $W_p(f, g)$ between two measures $f$ and $g$ is equivalent to the $L^p$-distance between the corresponding pseudo-inverse functions $\chi_f$ and $\chi_g$, respectively. In this respect, we set
\[
W_p(f,g)(\omega,t) := \|\chi_f(\cdot,\omega,t) - \chi_g(\cdot,\omega,t)\|_{L^p(0,h(\omega))}
\]
for $1 \leq p \leq \infty$. We then define a modified transport distance 
\[
\W_p(f,g)(t) := \|W_p(f,g)(\cdot,t)\|_{L^p}
\]
for $1 \leq p < \infty$.  We assume that $h$ is compactly supported, then we can also define
\[
\W_\infty(f,g)(t) := \lim_{p \to \infty}\|W_p(f,g)(\cdot,t)\|_{L^p}.
\]
By definition, for fixed $\omega \in \R$ we obtain
\[
\chi_f(0,\omega,t) \leq \chi_f(\eta,\omega,t) \leq \chi_f(h(\omega),\omega,t)
\]
for all $\eta \in [0,h(\omega)]$ and $t \geq 0$. This entails 
\[
|\chi_f(\eta,\omega,t) - \chi_f(\eta_*,\omega_*,t)| \leq \sup_{\omega \in supp(h)} \chi_f(h(\omega),\omega,t) - \inf_{\omega \in supp(h)} \chi_f(0,\omega,t).
\]
Moreover, we also easily find that the right hand side of the above inequality equals the diameter of the support for $f$ in $x$. Thus we have
\[
|\chi_f(\eta,\omega,t) - \chi_f(\eta_*,\omega_*,t)| \leq 2D_0.
\]
For notational simplicity, we set
\[
\chi_f := \chi_f(\eta,\omega,t) \quad \mbox{and} \quad \chi_{f_*} := \chi_f(\eta_*,\omega_*,t).
\]
Then our main result of this section is as follows.
\begin{theorem}\label{thm_cont} Let $f_t$ and $g_t$ be solutions of \eqref{main_eq} in the sense of Definition \ref{def_aconti} and have the compact support in $x$ and $\omega$. Then we have
\[
\W_p(f_t,g_t) \leq \W_p(f_0,g_0) e^{-\psi(2D_0)t}
\]
for $t \geq 0$ and $1 \leq p \leq \infty$.
\end{theorem}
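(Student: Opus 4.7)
The plan is to mirror, at the continuum level via the pseudo-inverse reformulation \eqref{eq_chi2}, the modulated $\ell_p$ monotonicity argument that was already used to prove Theorem \ref{L4.1} for the first-order particle system. The crucial structural input is that both $f_t$ and $g_t$ share the same $\omega$-marginal $h$, which is exactly what allowed the natural velocity term to drop out in the particle estimate and will do the same here. First I would subtract the two copies of \eqref{eq_chi2} written for $\chi_f$ and $\chi_g$ to obtain
\[
\pa_t (\chi_f - \chi_g)(\eta,\omega,t) = \intr \int_0^{h(\omega_*)} \lt[ \Psi(\chi_{f_*} - \chi_f) - \Psi(\chi_{g_*} - \chi_g) \rt] d\eta_* d\omega_*.
\]

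Next I would multiply this identity by $|\chi_f - \chi_g|^{p-2}(\chi_f - \chi_g)$ and integrate in $(\eta,\omega)$ over $\{0 \le \eta \le h(\omega)\} \times \R$, recognizing the left-hand side as $\tfrac{1}{p}\tfrac{d}{dt}\W_p^p(f_t,g_t)$. Then, using the oddness of $\Psi$, I would symmetrize the right-hand side by swapping $(\eta,\omega) \leftrightarrow (\eta_*,\omega_*)$ to get
\[
\frac{1}{p}\frac{d}{dt}\W_p^p(f_t,g_t) = -\frac{1}{2}\iiiint \lt[ |\chi_{f_*} - \chi_{g_*}|^{p-2}(\chi_{f_*} - \chi_{g_*}) - |\chi_f - \chi_g|^{p-2}(\chi_f - \chi_g) \rt] \lt[ \Psi(\chi_{f_*} - \chi_f) - \Psi(\chi_{g_*} - \chi_g) \rt] d\eta d\omega d\eta_* d\omega_*.
\]
This is the exact continuum analogue of identity \eqref{D4.3} with the particle sum replaced by the appropriate double integral.

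The rest of the argument replays the algebra of \eqref{D4.4}. The monotonicity of $s \mapsto |s|^{p-2}s$ combined with the elementary observation that $(\chi_{f_*}-\chi_f) \geq (\chi_{g_*}-\chi_g)$ is equivalent to $(\chi_{f_*}-\chi_{g_*}) \geq (\chi_f - \chi_g)$ guarantees that both bracketed factors share the same sign pointwise, so the integrand is non-negative. Using the uniform diameter bound $|\chi_{f_*}-\chi_f|,\,|\chi_{g_*}-\chi_g| \leq 2D_0$ together with the Lagrange mean value theorem applied to $\Psi$ on $[-2D_0,2D_0]$ (on which $\psi \geq \psi(2D_0)$), one gets the pointwise lower bound
\[
\lt[ \Psi(\chi_{f_*}-\chi_f) - \Psi(\chi_{g_*}-\chi_g) \rt]\,\mathrm{sgn}\lt[(\chi_{f_*}-\chi_f) - (\chi_{g_*}-\chi_g)\rt] \geq \psi(2D_0)\,\lt|(\chi_{f_*}-\chi_{g_*}) - (\chi_f-\chi_g)\rt|.
\]
Substituting this into the symmetrized integral and expanding the square produced by the other factor yields $\tfrac{1}{p}\tfrac{d}{dt}\W_p^p(f_t,g_t) \leq -\psi(2D_0)\,\W_p^p(f_t,g_t)$. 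A standard Grönwall argument then gives the claimed exponential contraction for $1 \leq p < \infty$, and the case $p = \infty$ follows by sending $p \to \infty$ (as the constant $\psi(2D_0)$ is $p$-independent, exactly as in the closing remark of Theorem \ref{T3.4}).

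The main obstacle I anticipate is the justification of the time differentiation of $\|\chi_f(\cdot,\omega,t) - \chi_g(\cdot,\omega,t)\|_{L^p}^p$ and, in particular, of exchanging $\pa_t$ with the $\eta$-$\omega$ integral. The absolute continuity in $\mathcal{P}_p$ from Definition \ref{def_aconti}, together with the compact support and the conservation of the $\omega$-marginal, ensures that $\chi_f, \chi_g \in AC([0,\infty); L^p)$ and that \eqref{eq_chi2} holds for a.e.\ $t$ (this is the point at which the framework in \cite{BCDP15} is invoked, as already cited by the authors just before Definition \ref{def_aconti}); once this regularity is in hand, the chain rule step and the symmetrization are routine. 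A minor but essential auxiliary point is that the uniform diameter bound used in Step 4 must be propagated from Theorem \ref{T3.1} to the continuum solution via Theorem \ref{T4.1}, so that $\psi(2D_0)$ is a genuine, time-independent lower bound for the kernel along the supports.
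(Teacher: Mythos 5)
Your proposal follows the paper's proof essentially step by step: differentiate along the pseudo-inverse reformulation \eqref{eq_chi2}, symmetrize via the oddness of $\Psi$, apply the Lagrange mean-value theorem together with the monotonicity of $s\mapsto|s|^{p-2}s$, replace $\psi$ by its lower bound $\psi(2D_0)$ on the diameter of the support, and close with Gr\"onwall. The only place you are terse (``expanding the square produced by the other factor'') is exactly where the paper invokes $\intr\int_0^{h(\omega)}\chi_f\,d\eta\,d\omega=\intr\int_0^{h(\omega)}\chi_g\,d\eta\,d\omega$ so that the cross term in the expansion of the symmetrized integral vanishes; you should make this equal-first-moment cancellation explicit, as without it the estimate only controls the centered $p$-th moment of $\chi_f-\chi_g$.
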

\begin{proof}
A straightforward computation gives
\begin{align}\label{chi_p}
\begin{aligned}
&\frac{d}{dt}\|\chi_f - \chi_g\|_{L^p}^p \cr
&\quad = p\intr \int_0^{h(\omega)} |\chi_f - \chi_g|^{p-2} (\chi_f - \chi_g) \pa_t (\chi_f - \chi_g)\,d\eta d\omega\cr
&\quad = p\intr \intr \int_0^{h(\omega_*)}\int_0^{h(\omega)}  |\chi_f - \chi_g|^{p-2} (\chi_f - \chi_g) \lt( \Psi(\chi_{f_*} - \chi_f) - \Psi(\chi_{g_*} - \chi_g)\rt) d\eta d\eta_* d\omega  d\omega_*\cr
&\quad = \frac p2\intr \intr \int_0^{h(\omega_*)}\int_0^{h(\omega)} \lt(|\chi_f - \chi_g|^{p-2} (\chi_f - \chi_g) - |\chi_{f_*} - \chi_{g_*}|^{p-2} (\chi_{f_*} - \chi_{g_*})\rt)\cr
&\hspace{5cm} \times\lt( \Psi(\chi_{f_*} - \chi_f) - \Psi(\chi_{g_*} - \chi_g)\rt)\,d\eta d\eta_* d\omega d\omega_*.
\end{aligned}
\end{align}
Here we interchanged between $(\eta,\omega)$ and $(\eta_*, \omega_*)$ and used the fact that $\Psi$ is odd. On the other hand, by the mean-value theorem, we get
\[
\Psi(\chi_{f_*} - \chi_f) - \Psi(\chi_{g_*} - \chi_g) = \psi(c) \lt(\chi_{f_*} - \chi_{g_*} - (\chi_f  - \chi_g) \rt),
\]
where $c$ is some point between $\chi_{f_*} - \chi_f$ and $\chi_{g_*} - \chi_g$. Moreover, by supporting hyperplane theorem, we find
\[
(|x|^{p-2} x - |y|^{p-2}y)\cdot (x-y) \geq 0
\]
for $p \geq 1$ and $x,y \in \R^d$.
This asserts that the right hand side of \eqref{chi_p} is bounded from above by
$$\begin{aligned}
& \frac {p \psi(2D_0)}2\intr \intr \int_0^{h(\omega_*)}\int_0^{h(\omega)} \lt(|\chi_f - \chi_g|^{p-2} (\chi_f - \chi_g) - |\chi_{f_*} - \chi_{g_*}|^{p-2} (\chi_{f_*} - \chi_{g_*})\rt)\cr
&\hspace{3cm} \times\lt(\chi_{f_*} - \chi_{g_*} - (\chi_f  - \chi_g) \rt) \,d\eta d\eta_* d\omega d\omega_*\cr
&\quad = -p\psi(2D_0)\intr \int_0^{h(\omega)}|\chi_f - \chi_g|^p \,d\eta d\omega \cr
&\qquad +  \frac {p \psi(2D_0)}2\intr \int_0^{h(\omega)} |\chi_f - \chi_g|^{p-2} (\chi_f - \chi_g) \,d\eta d\omega \intr \int_0^{h(\omega_*)} (\chi_{f_*} - \chi_{g_*})\,d\eta_* d\omega_*\cr
&\qquad +  \frac {p \psi(2D_0)}2\intr \int_0^{h(\omega_*)} |\chi_{f_*} - \chi_{g_*}|^{p-2} (\chi_{f_*} - \chi_{g_*}) \,d\eta_* d\omega_* \intr \int_0^{h(\omega)} (\chi_f - \chi_g) \,d\eta d\omega\cr
&\quad = -p\psi(2D_0)\intr \int_0^{h(\omega)}|\chi_f - \chi_g|^p \,d\eta d\omega,
\end{aligned}$$
where we used
\[
\intr \int_0^{h(\omega)} \chi_f \,d\eta d\omega =  \intr \int_0^{h(\omega)} \chi_g \,d\eta d\omega.
\]
Thus we have
\[
\frac{d}{dt}\|\chi_f - \chi_g\|_{L^p}^p  \leq -p\psi(2D_0)\|\chi_f - \chi_g\|_{L^p}^p,
\]
and this subsequently implies
\[
\frac{d}{dt}\|\chi_f - \chi_g\|_{L^p}  \leq -\psi(2D_0)\|\chi_f - \chi_g\|_{L^p}.
\]
Applying Gr\"onwall's lemma concludes our desired result.
\end{proof}

\begin{remark} Note the uniqueness of absolutely continuous solutions to \eqref{main_eq} is a simple consequence of Theorem \ref{thm_cont}. Moreover, with the change of variable $\Gamma$ in Section \ref{ssec:2.5}, we obtain the uniqueness of the absolutely continuous solutions to \eqref{k_CS}.
\end{remark}

\section{Conclusion}\label{sec:6}
\setcounter{equation}{0}
\vspace{0.3cm}
In this paper, we studied the one dimensional CS model with a singular communication weight. At the particle level, we obtained uniform-in-time stability for system \eqref{p_CS} and \eqref{D1}. Then, we applied the uniform-in-time stability to yield the uniform-in-time mean-field limit from the particle systems  \eqref{p_CS} and \eqref{D1} to the kinetic equation \eqref{k_CS} and \eqref{main_eq}, respectively. Moreover, the large time behavior of the measure-valued solutions can be implied as a directly consequence of the uniform-in-time mean-field limit. Finally, we added absolutely continuity property to the initial data, and applied the pseudo inverse method to obtain the contractivity and uniqueness of the measure-valued solutions. So far, this method cannot be applied to multi-dimensional CS model, since there is no equivalent first-order equation in multi-dimensional case. We will study the stability and the mean-field limit for multi-dimensional singular CS model in future work. 
%
%
%
%
\section*{Acknowledgments}
The work of Y.-P. Choi is supported by NRF grant (No. 2017R1C1B2012918) and Yonsei University Research Fund of 2019-22-021. The work of X. Zhang is supported by the National Natural Science Foundation of China (Grant No. 11801194), the National Natural Science Foundation of China (Grant No. 11971188) and the HUST Research Fund.

%
%
%
%

\end{document}